\newtheorem{Theorem}{Theorem}[section]
\newtheorem{Lemma}[Theorem]{Lemma}
\newtheorem{Proposition}[Theorem]{Proposition}
\newtheorem{Corollary}[Theorem]{Corollary}
\newtheorem{Definition}[Theorem]{Definition}
\newtheorem{Example}[Theorem]{Example}
\newtheorem{Remark}[Theorem]{Remark}
\numberwithin{equation}{section}
\begin{document}
\title[Reduced Wu and Generalized Simon Invariants for Spatial Graphs]{Reduced Wu and Generalized Simon Invariants for Spatial Graphs}
\author[E. Flapan, W.R. Fletcher and R. Nikkuni]
{ERICA FLAPAN\\
Department of Mathematics\addressbreak Pomona College\addressbreak Claremont, CA 91711, USA
\nextauthor WILL FLETCHER\\
Biophysics Program\addressbreak Stanford University\addressbreak Stanford, CA 94305, USA
\and\ RYO NIKKUNI\\
Department of Mathematics, \addressbreak
Tokyo Woman's Christian University, \addressbreak
2-6-1 Zempukuji, Suginami-ku, \addressbreak
Tokyo 167-8585, Japan
}

    
    




\maketitle

\begin{abstract}
	We introduce invariants of graphs embedded in $S^3$ which are related to the Wu invariant and the Simon invariant.  Then we use our invariants to prove that certain graphs are intrinsically chiral, and to obtain lower bounds for the minimal crossing number of particular embeddings of graphs in $S^3$.  \end{abstract}

\section{Introduction}

While there are numerous invariants for embeddings of graphs in $3$-manifolds, most have limited applications either because they are hard to compute or because they are only defined for particular types of graphs.  For example, Thompson \cite{Tho} defined a powerful polynomial invariant for graphs embedded in arbitrary 3-manifolds, which can detect whether an embedding of a graph in $S^3$ is planar.  However, computing Thompson's invariant requires identifying topological features of a sequence of 3-manifolds, such as whether each manifold is compressible.   

Yamada \cite{Ya} and Yokota \cite{Yo} introduced polynomial invariants for spatial graphs (i.e., graphs embedded in $S^3$).  The Yamada polynomial is an ambient isotopy invariant for spatial graphs with vertices of degree at most 3.  However, for other spatial graphs it is only a regular isotopy invariant.   It is convenient to use because  it can be computed using skein relations.  Also, the Yamada polynomial can be used to detect whether a spatial graph with vertices of degree at most 3 is chiral (i.e., distinct from its mirror image).  The Yokota polynomial is an ambient isotopy invariant for all spatial graphs that reduces to the Yamada polynomial for graphs with vertices of degree at most 3.  However, the Yokota polynomial is more difficult to compute, and cannot be used to show that a spatial graph is chiral.  

In a lecture in 1990, Jon Simon introduced an invariant of embeddings of the graphs $K_5$ and $K_{3,3}$ with labeled vertices in $S^3$.  The Simon invariant is easy to compute from a projection of an embedding and has been useful in obtaining results about embeddings of non-planar graphs \cite{Huh, Nikkuni 2006a, Nikkuni 2006, Nikkuni 2009, Nikkuni 2009b, Ohyama, Shinjo, Taniyama 1994, Taniyama 1995}.  In 1995, Taniyama \cite{Taniyama 1995} showed that the Simon invariant is a special case of a cohomology invariant for all spatial graphs which had been introduced by Wu \cite{Wu 1960, Wu 1965}, and showed that the Wu invariant can be defined combinatorially from a graph projection.  However, the Wu invariant is not always easy to compute, and (like the Simon invariant) depends on the choice of labeling of the vertices of a graph.  For this reason, the role of the Wu invariant in distinguishing a spatial graph from its mirror image has been limited to showing that for any embedded non-planar graph $\Gamma$, there is no orientation reversing homeomorphism of $(S^3,\Gamma)$ that fixes every vertex of $\Gamma$ (see \cite{Nikkuni 2006}). Without this restriction on the vertices, many non-planar graphs including $K_5$ and $K_{3,3}$ have achiral embeddings as shown in Figure~\ref{achiral}.

\begin{figure}[here]
\begin{center}
\includegraphics[width=0.5\textwidth]{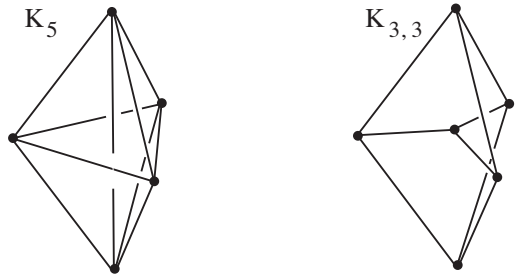}
\caption{Achiral embeddings of $K_5$ and $K_{3,3}$.}
\label{achiral}
\end{center}
\end{figure}

In this paper, we define numerical invariants that are obtained by reducing the Wu invariant and by generalizing the Simon invariant.  We then use our invariants to prove that no matter how the complete graph $K_7$, the M\"{o}bius ladders $M_{2N+1}$, and the Heawood graph are embedded in $S^3$, there is no orientation reversing homeomorphism of $S^3$ which takes the embedded graph to itself.  Finally, we show that our invariants can be used to give a lower bound on the minimal crossing number of embedded graphs.

\section{Wu Invariants and Reduced Wu Invariants}

In 1960, Wu \cite{Wu 1960} introduced an invariant as follows. Let $C_{2}(X)$ be the {\it configuration space} of ordered pairs of points from a topological space $X$, namely
\begin{eqnarray*}
C_{2}(X) = \left\{(x,y)\in X\times X~|~x\neq y\right\}. 
\end{eqnarray*}
Let $\sigma$ be the involution of $C_{2}(X)$ given by $\sigma(x,y)=(y,x)$. The integral cohomology group of ${\rm
Ker}\left(1+\sigma_{\sharp}\right)$ denoted by
$H^{*}\left(C_{2}(X),\sigma\right)$ is said to be the {\it
skew-symmetric integral cohomology group} of the pair
$\left(C_{2}(X),\sigma\right)$, where $\sigma_{\sharp}$ denotes the
chain map induced by $\sigma$.  Wu  \cite{Wu 1960} proved that $H^{2}(C_{2}({\mathbb R}^{3}),\sigma)\cong {\mathbb Z}$, and hence is generated by some element $\Sigma$.  Let $f:G\to {\mathbb R}^{3}$ be a spatial embedding of a graph $G$ with labeled vertices and  orientations on the edges. Then $f$ naturally induces an equivariant embedding $f\times f:C_{2}(G)\to C_{2}({\mathbb R}^{3})$ with respect to the action $\sigma$, and therefore induces a homomorphism 
\begin{eqnarray*}
(f\times f)^{*}:H^{2}(C_{2}({\mathbb R}^{3}),\sigma)
\longrightarrow H^{2}(C_{2}(G),\sigma). 
\end{eqnarray*}
The element $(f\times f)^{*}(\Sigma)$ is an ambient isotopy invariant known as the {\it Wu invariant}.   

In order to explicitly calculate the Wu invariant, Taniyama \cite{Taniyama 1995} developed the following combinatorial approach.  Let $G$ be a graph with vertices labeled $v_{1},v_{2},\ldots,v_{m}$ and oriented edges labeled $e_{1},e_{2},\ldots,e_{n}$. For each pair of disjoint edges $e_{i}$ and $e_{j}$, we define a variable $E^{e_{i},e_{j}}=E^{e_{j},e_{i}}$; and for each edge $e_{i}$ and vertex $v_{s}$ which is disjoint from $e_i$, we define a variable $V^{e_{i},v_{s}}$. Let $Z(G)$ be the free ${\mathbb Z}$-module generated by the collection of $E^{e_{i},e_{j}}$'s. For each $V^{e_{i},v_{s}}$, let $\delta(V^{e_{i},v_{s}})$ be the element of $Z(G)$ given by the sum of all $E^{e_{i},e_{k}}$ such that $e_k$ is disjoint from $e_i$ and has initial vertex $v_s$, minus the sum of all $E^{e_{i},e_{k}}$ such that $e_k$ is disjoint from $e_i$ and has terminal vertex $v_s$.  Thus

\begin{eqnarray*}
\delta(V^{e_{i},v_{s}})
= 
\sum_{\substack{
{I(k)=s} 
\\ e_{i}\cap e_{k}=\emptyset
}
}E^{e_{i},e_{k}}
-
\sum_{\substack{
{T(l)=s} 
\\ e_{i}\cap e_{l}=\emptyset
} 
}E^{e_{i},e_{l}}, 
\end{eqnarray*}

\noindent where $I(k)=s$ indicates that the initial vertex of $e_{k}$ is $v_{s}$, and $T(l)=s$ indicates that the terminal vertex of $e_{l}$ is $v_{s}$. Let $B(G)$ be the submodule of $Z(G)$ generated by the collection of $\delta(V^{e_{i},v_{s}})$'s. We let $L(G)$ denote the quotient module $Z(G) / B(G)$, and call it a {\it linking module} of $G$. Then $L(G)\cong H^{2}(C_{2}(G),\sigma)$. 

Now let $f$ be an embedding of the labeled oriented graph $G$ in $S^3$.  Fix a projection of $f(G)$ and let $\ell(f(e_{i}),f(e_{j}))=\ell(f(e_{j}),f(e_{i}))$ denote the sum of the signs of the crossings between $f(e_{i})$ and $f(e_{j})$. Taniyama \cite{Taniyama 1995} showed  that the equivalence class 
\begin{eqnarray*}
{\mathcal L}(f)=
\left[
\sum_{e_{i}\cap e_{j}=\emptyset}\ell(f(e_{i}),f(e_{j}))E^{e_{i},e_{j}}
\right]
\in L(G)
\end{eqnarray*}
coincides with $(f\times f)^{*}(\Sigma)$ through the isomorphism from $H^{2}(C_{2}(G),\sigma)$ to $L(G)$. Thus we may regard ${\mathcal L}(f)$ as the Wu invariant of $f$. Furthermore, $H^{2}(C_{2}(G),\sigma)$ is torsion free, namely $L(G)$ is a free ${\mathbb Z}$-module, and for an orientation-reversing self-homeomorphism $\Phi$ of $S^{3}$, it follows that ${\mathcal L}(\Phi\circ f)=-{\mathcal L}(f)$.

\begin{figure}[htbp]
     
\includegraphics[width=\textwidth]{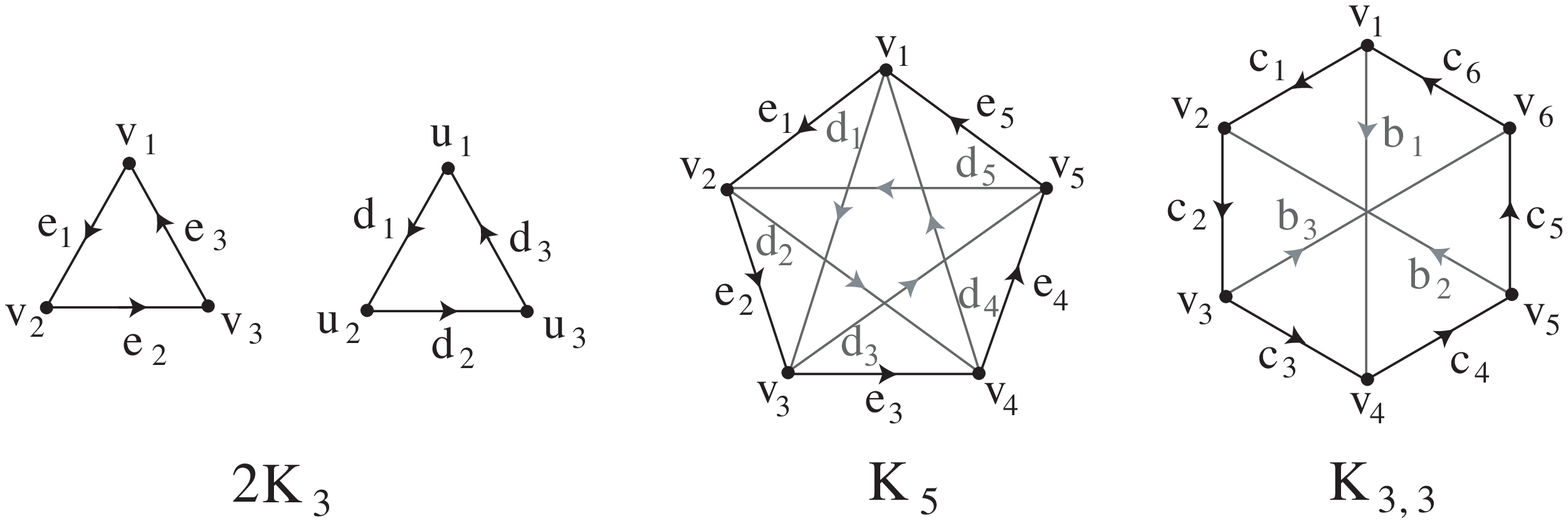}
   
   \caption{The Wu invariants for embeddings of these graphs are given in Examples \ref{2K3}, \ref{K5}, and \ref{K33}.}
  \label{2K3K5K33}
\end{figure} 

\begin{Example}\label{2K3}
{\rm 
Let $2K_{3}$ denote the graph consisting of two copies of $K_{3}$, labeled and oriented as illustrated in Figure~\ref{2K3K5K33}, and let $f$ be a spatial embedding of $2K_{3}$.  It was shown in \cite{Taniyama 1995} that the linking module $L(2K_{3})=\langle [E^{e_{1},d_{1}}]\rangle \cong {\mathbb Z}$, and the Wu invariant of $f$ is given by:
\begin{eqnarray*}
{\mathcal L}(f)
=\sum_{1\le i,j\le 3}\ell(f(e_{i}),f(d_{j}))[E^{e_{1},d_{1}}]
=2{\rm lk}(f)[E^{e_{1},d_{1}}], 
\end{eqnarray*}
where ${\rm lk}(f)$ denotes the {\it linking number} of the pair of triangles in $S^{3}$. 
}
\end{Example}
\medskip

\begin{Example}\label{K5}
{\rm 
Let $K_{5}$ denote the complete graph on five vertices, labeled and oriented as illustrated in Figure~\ref{2K3K5K33}, and let $f$ be a spatial embedding of $K_{5}$. It was shown in \cite{Taniyama 1995} that the linking module $L(K_{5})=\langle [E^{e_{1},e_{3}}]\rangle \cong {\mathbb Z}$ and the Wu invariant is given by:
\begin{eqnarray*}
{\mathcal L}(f)
=\sum_{a\cap b=\emptyset}\varepsilon(a,b)\ell(f(a),f(b))[E^{e_{1},e_{3}}], 
\end{eqnarray*}
where $\varepsilon(a,b)$ is defined by $\varepsilon(e_{i},e_{j})=1$, $\varepsilon(d_{i},d_{j})=-1$ and $\varepsilon(e_{i},d_{j})=-1$. 
}

\end{Example}
\medskip

 We work out the following example which it is given in \cite{Taniyama 1995} without details.

\begin{Example}\label{K33}
{\rm 
Let $K_{3,3}$ denote the complete bipartite graph, labeled and oriented as illustrated in Figure~\ref{2K3K5K33}, and let $f$ be a spatial embedding of $K_{3,3}$. Then $Z(K_{3,3})$ is a free ${\mathbb Z}$-module generated by 
\begin{eqnarray*}
&&E^{c_{1},c_{3}},E^{c_{2},c_{4}},E^{c_{3},c_{5}},E^{c_{4},c_{6}},E^{c_{5},c_{1}},E^{c_{6},c_{2}},E^{c_{1},c_{4}},E^{c_{2},c_{5}},E^{c_{3},c_{6}},\\
&&E^{b_{1},c_{2}},E^{b_{1},c_{5}},E^{b_{3},c_{4}},E^{b_{3},c_{1}},E^{b_{2},c_{3}},E^{b_{2},c_{6}},E^{b_{1},b_{2}},E^{b_{2},b_{3}},E^{b_{3},b_{1}}
\end{eqnarray*} 
and $B(K_{3,3})$ is a submodule of $Z(K_{3,3})$ generated by 
\begin{eqnarray*}
&&E^{b_{1},c_{2}}-E^{c_{6},c_{2}},\ E^{c_{1},c_{3}}-E^{c_{3},c_{6}},\ E^{c_{1},c_{4}}-E^{c_{4},c_{6}},\\
&&E^{c_{5},c_{1}}+E^{b_{1},c_{5}},\ E^{b_{3},c_{1}}+E^{b_{3},b_{1}},\ E^{b_{1},b_{2}}-E^{b_{2},c_{6}},\\
&&-E^{b_{2},c_{3}}-E^{c_{1},c_{3}},\ E^{c_{2},c_{4}}-E^{c_{1},c_{4}},\ E^{c_{2},c_{5}}-E^{c_{5},c_{1}},\\
&&E^{c_{6},c_{2}}-E^{b_{2},c_{6}},\ -E^{b_{2},b_{3}}-E^{b_{3},c_{1}},\ E^{b_{1},c_{2}}-E^{b_{1},b_{2}},\\
&&E^{b_{3},c_{4}}-E^{c_{2},c_{4}},\ E^{c_{3},c_{5}}-E^{c_{2},c_{5}},\ E^{c_{3},c_{6}}-E^{c_{6},c_{2}},\\
&&E^{c_{1},c_{3}}+E^{b_{3},c_{1}},\ E^{b_{2},c_{3}}+E^{b_{2},b_{3}},\ E^{b_{3},b_{1}}-E^{b_{1},c_{2}},\\
&&-E^{b_{1},c_{5}}-E^{c_{3},c_{5}},\ E^{c_{4},c_{6}}-E^{c_{3},c_{6}},\ E^{c_{1},c_{4}}-E^{c_{1},c_{3}},\\
&&E^{c_{2},c_{4}}-E^{b_{1},c_{2}},\ -E^{b_{1},b_{2}}-E^{b_{2},c_{3}},\ E^{b_{3},c_{4}}-E^{b_{3},b_{1}},\\
&&E^{b_{2},c_{6}}-E^{c_{4},c_{6}},\ E^{c_{5},c_{1}}-E^{c_{1},c_{4}},\ E^{c_{2},c_{5}}-E^{c_{2},c_{4}},\\
&&E^{c_{3},c_{5}}+E^{b_{2},c_{3}},\ E^{b_{1},c_{5}}+E^{b_{1},b_{2}},\ E^{b_{2},b_{3}}-E^{b_{3},c_{4}},\\
&&-E^{b_{3},c_{1}}-E^{c_{5},c_{1}},\ E^{c_{6},c_{2}}-E^{c_{2},c_{5}},\ E^{c_{3},c_{6}}-E^{c_{3},c_{5}},\\
&&E^{c_{4},c_{6}}-E^{d_{3},c_{4}},\ -E^{b_{3},b_{1}}-E^{b_{1},c_{5}},\ E^{b_{2},c_{6}}-E^{b_{2},b_{3}}. 
\end{eqnarray*}
Then we have 
\begin{eqnarray*}
&& [E^{c_{1},c_{3}}]=[E^{c_{2},c_{4}}]=[E^{c_{3},c_{5}}]=[E^{c_{4},c_{6}}]=[E^{c_{5},c_{1}}]=[E^{c_{6},c_{2}}]\\
&=& [E^{c_{1},c_{4}}]=[E^{c_{2},c_{5}}]=[E^{c_{3},c_{6}}]=[E^{b_{1},b_{2}}]=[E^{b_{2},b_{3}}]=[E^{b_{3},b_{1}}]\\
&=& [E^{b_{1},c_{2}}]=[E^{b_{3},c_{4}}]=[E^{b_{2},c_{6}}]\\
&=& -[E^{b_{1},c_{5}}] = -[E^{b_{3},c_{1}}] = -[E^{b_{2},c_{3}}], 
\end{eqnarray*} 
Then the linking module $L(K_{3,3})=\langle [E^{c_{1},c_{3}}]\rangle \cong {\mathbb Z}$ and the Wu invariant is given by: 
\begin{eqnarray*}
{\mathcal L}(f)
=\sum_{a\cap b=\emptyset}\varepsilon(a,b)\ell(f(a),f(b))[E^{c_{1},c_{3}}], 
\end{eqnarray*}
where $\varepsilon(a,b))$ is defined by 
$\varepsilon(c_{i},c_{j})=1$, $\varepsilon(b_{i},b_{j})=1$, and

\begin{equation*}
	\varepsilon(c_i,b_j) = 
	\begin{cases}
		1 &\text{if } c_i \text{ and } b_j \text { are parallel in Figure~\ref{2K3K5K33}}
\\
		-1 & \text{if } c_i \text{ and } b_j \text { are anti-parallel in Figure~\ref{2K3K5K33}}
		\end{cases}
\end{equation*}}
\end{Example}
\medskip

\begin{Remark}\rm{

 It was shown in \cite{Taniyama 1995} that $L(G)=0$ if and only if $G$ is a planar graph which does not contain a pair of two disjoint cycles. 
 }\end{Remark}

 \begin{Remark}
 \rm{
 It was shown in \cite{N00} that if the graph $G$ is {\it $3$-connected}, then 
\begin{eqnarray*}
{\rm rank}L(G)=\frac{1}{2}
\left\{
\beta_{1}(G)^{2}+\beta_{1}(G)+4|E(G)|-\sum_{v\in V(G)}\left({\rm deg}(v)\right)^{2}
\right\}, 
\end{eqnarray*}
where $\beta_{1}(G)$ denotes the first Betti number of $G$ and ${\rm deg}(v)$ denotes the valency of a vertex $v$. For example, ${\rm rank}(L(K_{6}))=10$ and ${\rm rank}(L(K_{7}))=36$. 
}
\end{Remark}

\medskip
\begin{Definition}
Let $f$ be a spatial embedding of an oriented graph $G$ with linking module $L(G)$ and Wu invariant ${\mathcal L}(f)\in L(G)$. Let $\varepsilon:L(G)\to {\mathbb Z}$ be a homomorphism. Then we call the integer $\varepsilon({\mathcal L}(f))$ the {\bf reduced Wu invariant of $f$ with respect to $\varepsilon$} and denote it by $\tilde{\mathcal L}_{\varepsilon}(f)$. 
\end{Definition}

For a pair of disjoint edges $e_i$ and $e_j$, we denote $\varepsilon([E^{e_{i},e_{j}}])$ by $\varepsilon(e_i,e_j)$.  Thus

\begin{eqnarray*}
\tilde{\mathcal L}_{\varepsilon}(f)=\varepsilon\left(\left[
\sum_{e_{i}\cap e_{j}=\emptyset}\ell(f(e_{i}),f(e_{j}))E^{e_{i},e_{j}}
\right]\right)=\sum_{e_{i}\cap e_{j}=\emptyset}\ell(f(e_{i}),f(e_{j}))\varepsilon(e_i,e_j).
\end{eqnarray*}

\medskip

\begin{Example}\label{2K3_2}
{\rm 
Consider $2K_{3}$, labeled and oriented as in Figure~\ref{2K3K5K33}, and let $f$ be an embedding of $2K_{3}$ in $S^3$. Let $\varepsilon$ be the isomorphism from $L(2K_{3})$ to ${\mathbb Z}$ defined by $\varepsilon(e_{1},d_{1})=1$. Then by Example \ref{2K3}, we have $\tilde{\mathcal L}_{\varepsilon}(f)=2{\rm lk}(f)$. 
}
\end{Example}
\medskip

\begin{Example}\label{K5K33_2}
{\rm 
Let $G$ be $K_{5}$ or $K_{3,3}$ labeled and oriented as illustrated in Figure~\ref{2K3K5K33}, and let $f$ be an embedding of $G$ in $S^3$.  Let  $\varepsilon$ be the isomorphism from $L(G)$ to ${\mathbb Z}$ defined by   $\varepsilon(e_1,e_3)=1$ for $G=K_5$ and $\varepsilon(c_1, c_3)=1$ for $G=K_{3,3}$.  Then it follows that $\tilde{\mathcal L}_{\varepsilon}(f)=\sum_{a\cap b=\emptyset}\varepsilon(a,b)\ell(f(a),f(b))$, where the value of $\varepsilon(a,b)$ for an arbitrary pair of edges is given in Example \ref{K5} if $G=K_{5}$ and in Example \ref{K33} if $G=K_{3,3}$. }
\end{Example}

\begin{figure}[htbp]
      \begin{center}
\scalebox{0.425}{\includegraphics{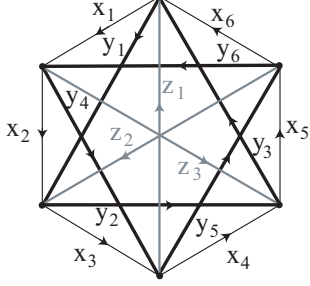}}
      \end{center}
   \caption{A reduced Wu invariant for $K_6$ is given in Example \ref{K6gs}.}
  \label{K6Simon}
\end{figure} 

\begin{Example}\label{K6gs}
{\rm Consider $K_{6}$, labeled and oriented as in Figure~\ref{K6Simon}, and let $f$ be an embedding of $K_{6}$ in $S^3$. For any pair of disjoint edges $a$ and $b$ in $K_6$, we define $\varepsilon(a,b)$ as follows:

\begin{equation*}
	\varepsilon(x_i,x_j) = 
	\begin{cases}
		3 &\text{if } x_i \text{ and } x_j \text { are anti-parallel in Figure~\ref{K6Simon}}
\\
		2 & \text{if } x_i \text{ and } x_j \text { are neither parallel nor anti-parallel in Figure~\ref{K6Simon}}
		\end{cases}
\end{equation*}

\medskip

\begin{equation*}
	\varepsilon(y_i,y_j) = 
	\begin{cases}
		0 &\text{if } y_i \text{ and } y_j \text { are anti-parallel in Figure~\ref{K6Simon}}
\\
		-1 & \text{if } y_i \text{ and } y_j \text { are neither parallel nor anti-parallel in Figure~\ref{K6Simon}}
		\end{cases}
\end{equation*}

\medskip

\begin{equation*}
	\varepsilon(x_i,z_j) = 
	\begin{cases} 		-1 & \text{if } x_i \text{ and } z_j \text { are anti-parallel in Figure~\ref{K6Simon}}
		
\\
1 &\text{if } x_i \text{ and } z_j \text { are parallel in Figure~\ref{K6Simon}}
		\end{cases}
\end{equation*}
\medskip

\noindent In addition, we define $\varepsilon(z_{i},z_{j})=1$, $\varepsilon(x_{i},y_{j})=-1$, and $\varepsilon(y_{i},z_{j})=0$. Then it can be checked that $\varepsilon$ gives a homomorphism from $L(K_{6})$ to ${\mathbb Z}$. It follows that $\tilde{\mathcal L}_{\varepsilon}(f)=\sum_{a\cap b=\emptyset}\varepsilon(a,b)\ell(f(a),f(b))$ is a reduced Wu invariant for $K_6$. 
}
\end{Example}

\section{Generalized Simon Invariants}\label{GSI}
Simon introduced the following function of embeddings $f$ of the  graphs $K_5$ and $K_{3,3}$, labeled and oriented as in Figure \ref{2K3K5K33}.  Let

\[
\widehat{L}_{\varepsilon}(f) = \sum_{a\cap b=\emptyset} \varepsilon(a,b) \ell(f(a),f(b))
\]

\noindent where $\varepsilon(a,b)$ is defined as $\varepsilon(e_{i},e_{j})=1$, and $\varepsilon(d_{i},d_{j})=\varepsilon(e_{i},d_{j})=-1$ for $K_5$; and $\varepsilon(a,b)$ is defined as $\varepsilon(c_{i},c_{j})=1$, $\varepsilon(b_{i},b_{j})=1$
\begin{equation*}
	\varepsilon(c_i,b_j) = 
	\begin{cases}
		1 &\text{if } c_i \text{ and } b_j \text { are parallel in Figure~\ref{2K3K5K33}}
\\
		-1 & \text{if } c_i \text{ and } b_j \text { are anti-parallel in Figure~\ref{2K3K5K33}}
		\end{cases}
\end{equation*}
\noindent  for $K_{3,3}$.

 Simon then proved that for any projection of an embedding $f$ of the oriented labeled graphs $K_5$ and $K_{3,3}$, the value  of \[
\sum_{a,b \in G} \varepsilon(a,b) \ell(f(a),f(b))
\]

\noindent is invariant under the five Reidemeister moves for spatial graphs given in Figure~\ref{Figuremy5radmoves}.  This invariant is known as the {\it Simon invariant}.

\begin{figure}[here]
\begin{center}
\includegraphics[width=0.6\textwidth]{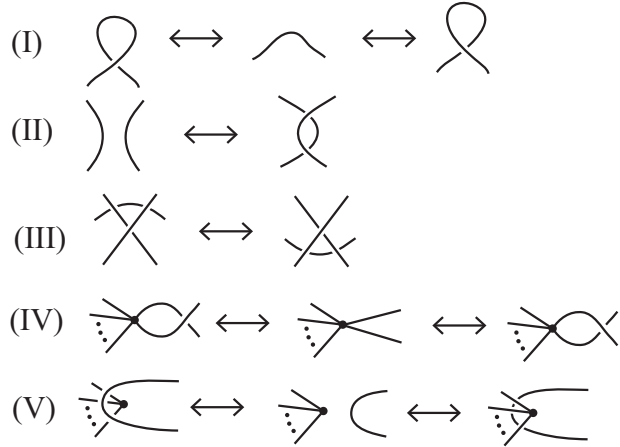}
\caption{The Reidemeister moves for embedded graphs.}
\label{Figuremy5radmoves}
\end{center}
\end{figure}

By using Simon's method we can create similar invariants for many other embedded graphs.  In particular, let $G$ be an oriented graph and let $f$ be an embedding of $G$ in $S^3$. If we can define a function $\varepsilon(a,b)$ from the set of pairs of disjoint edges of $G$ to the integers such that for any projection of $f(G)$ the value of

\[
\widehat{L}_{\varepsilon}(f) = \sum_{a\cap b=\emptyset} \varepsilon(a,b) \ell(f(a),f(b))
\]
\noindent is invariant under the five Reidemeister moves, then we say that $\widehat{L}_{\varepsilon}(f)$ is a {\it generalized Simon invariant of $f(G)$}.  If for every embedding $f$ of $G$, $\widehat{L}_{\varepsilon}(f)$ is a {\it generalized Simon invariant of $f(G)$}, then we say that $\widehat{L}_{\varepsilon}(f)$ is a {\it generalized Simon invariant of $G$.}
\medskip

Observe that the reduced Wu invariants given in Example \ref{K5K33_2} are identical to their Simon invariants.  In fact, every reduced Wu invariant with respect to a given homomorphism $\varepsilon$ is a generalized Simon invariant with epsilon coefficients given by $\varepsilon(a,b)$.  However, not every generalized Simon invariant is necessarily a reduced Wu invariant.  In order to distinguish these two types of invariants, we use $\tilde{\mathcal L}_{\varepsilon}(f)$ to denote a reduced Wu invariant and $\widehat{L}_{\varepsilon}(f)$ to denote a generalized Simon invariant.

We say that a graph embedded in ${S}^3$ is \textit{achiral} if there is an orientation reversing homeomorphism of ${S}^3$ that takes the graph to itself setwise.  Otherwise, we say the embedded graph is \textit{chiral}.  We say that an abstract graph is \textit{intrinsically chiral} if every embedding of the graph in $S^3$ is chiral.  Note that when we talk about chirality or achirality we are considering embedded graphs as subsets of $S^3$ disregarding any edge labels or orientations.  For example, we saw in Figure~\ref{achiral} that $K_5$ and $K_{3,3}$ have achiral embeddings, although it was shown in \cite{Nikkuni 2006} that no embedding of either of these graphs has an orientation reversing homeomorphism that preserves the edge labels and orientations given in Figure~\ref{2K3K5K33}.

We now define generalized Simon invariants for some specific graphs and families of graphs, and use these invariants to prove that the graphs are intrinsically chiral.

\medskip
\subsection*{The complete graph $K_7$}\label{sec:K7}
Consider the complete graph $K_7$ with labeled edges as illustrated in Figure~\ref{FigureK7_labeled}.  We refer to the edges $x_1, x_2, ..., x_7$ as ``outer edges''  and the rest of the edges as ``inner edges.''  We refer to the Hamiltonian cycle $\overline{y_1 y_2 ... y_7}$ as the {\it 1-star} since these edges skip over one vertex relative to the cycle $\overline{x_1x_2...x_7}$. Similarly, we refer to the Hamiltonian cycle $\overline{z_1 z_2 ... z_7}$ as the {\it 2-star} since these edges skip over two vertices relative to the cycle $\overline{x_1x_2...x_7}$.  For consistency, we also use the term {\it 0-star} to refer to the Hamiltonian cycle $\overline{x_1x_2...x_7}$.  We orient the edges around each of the stars as illustrated.  Note that this classification of oriented edges is only dependent on our initial choice of an oriented 0-star.

\begin{figure}[here]
\begin{center}
\includegraphics[width=0.35\textwidth]{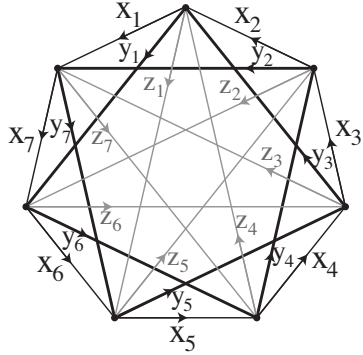}
\caption{An illustration of the oriented $K_7$, with the 0-star in black, the 1-star in bold, and the 2-star in grey.}
\label{FigureK7_labeled}
\end{center}
\end{figure}

We define the {\it epsilon coefficient} of a pair of disjoint edges by the function:

\[	\varepsilon(x_i,x_j) = \varepsilon(y_i,y_j) = \varepsilon(z_i,z_j) = \varepsilon(x_i,z_j) = \varepsilon(y_i,z_j) = 1 	\]
\[	\varepsilon(x_i,y_j) = -1.	\]

Given an oriented 0-star and an embedding $f: K_7 \rightarrow {S}^3$ with a regular projection, we define the integer $\widehat{L}_{\varepsilon}(f)$ by
\medskip

\[
\widehat{L}_{\varepsilon}(f) = \sum_{a\cap b=\emptyset} \varepsilon(a,b) \ell(f(a),f(b)).
\]

\medskip

\begin{Lemma}\label{k7 invariant}  Consider $K_7$ with a fixed choice of an oriented $0$-star. Then for any embedding $f:K_7\to S^3$, the value of $\widehat{L}_{\varepsilon}(f)$ is an ambient isotopy invariant.
\end{Lemma}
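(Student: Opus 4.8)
The plan is to prove the stronger statement that the given epsilon coefficient function $\varepsilon$ on pairs of disjoint edges of $K_7$ extends to a homomorphism $L(K_7)\to\mathbb Z$. Granting this, $\widehat L_\varepsilon(f)$ is a reduced Wu invariant: by the definition of $\tilde{\mathcal L}_\varepsilon$ and the formula displayed just after it, $\tilde{\mathcal L}_\varepsilon(f)=\varepsilon(\mathcal L(f))=\sum_{a\cap b=\emptyset}\ell(f(a),f(b))\,\varepsilon(a,b)=\widehat L_\varepsilon(f)$, and since $\mathcal L(f)$ is an ambient isotopy invariant and $\varepsilon$ is a fixed homomorphism, so is $\widehat L_\varepsilon(f)$. (Equivalently one could verify invariance directly under the five Reidemeister moves of Figure~\ref{Figuremy5radmoves}: moves R1--R3 are immediate, and the change produced by the vertex moves R4, R5 is exactly an expression of the form $\pm\varepsilon\bigl(\delta(V^{e,v})\bigr)$, so the two routes require the same computation.)

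To set up the computation I would identify $V(K_7)$ with $\mathbb Z/7\mathbb Z$ so that, with the star orientations of Figure~\ref{FigureK7_labeled}, the $0$-star edges are $\{i,i+1\}$ oriented $i\to i+1$, the $1$-star edges are $\{i,i+2\}$ oriented $i\to i+2$, and the $2$-star edges are $\{i,i+3\}$ oriented $i\to i+3$. Every edge has a well-defined \emph{type} ($0$-, $1$- or $2$-star); the type of $\{s,c\}$ depends only on $|c-s|\in\{1,2,3\}$, and $\varepsilon(a,b)$ depends only on the types of $a$ and $b$. The map $\varepsilon$ is obviously a homomorphism $Z(K_7)\to\mathbb Z$, so all that must be shown is that it kills the submodule $B(K_7)=\langle\,\delta(V^{e_i,v_s})\,\rangle$; that is, for each edge $e_i$ and each vertex $v_s$ not on $e_i$,
\[
\varepsilon\bigl(\delta(V^{e_i,v_s})\bigr)=\sum_{\substack{I(k)=s\\ e_k\cap e_i=\emptyset}}\varepsilon(e_i,e_k)\;-\;\sum_{\substack{T(l)=s\\ e_l\cap e_i=\emptyset}}\varepsilon(e_i,e_l)=0 .
\]

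The final step is the verification of this identity, and here I would use symmetry to keep it finite. The dihedral group $D_7$ acting on $\mathbb Z/7\mathbb Z$ carries $K_7$ to itself, preserves the three stars as edge sets, and hence fixes $\varepsilon$; a reflection reverses all three cyclic orientations, which replaces each $\delta(V^{e_i,v_s})$ by its negative, so $D_7$ sends the generating set of $B(K_7)$ to itself up to sign. Thus it suffices to check the identity for one representative $(e_i,v_s)$ in each $D_7$-orbit. Since $D_7$ acts transitively on the seven edges of each type, and the stabilizer of an edge $e_i$ (the reflection through $e_i$) fixes one of the five vertices disjoint from $e_i$ and swaps the other four in two pairs, there are exactly nine cases: three classes of $v_s$ for each of the three edge types. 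In each case one lists the four edges joining $v_s$ to the vertices outside $e_i\cup\{v_s\}$, records each one's type and whether $v_s$ is its initial or terminal endpoint, and adds the signs. For instance, with $e_i=x_0=\{0,1\}$ and $v_s=4$ the initiating edges are $x_4,y_4$ and the terminating edges are $x_3,y_2$, giving $(\varepsilon(x,x)+\varepsilon(x,y))-(\varepsilon(x,x)+\varepsilon(x,y))=0$; with $e_i=z_0=\{0,3\}$ and $v_s=1$ the initiating edges are $x_1,z_1$ and the terminating edges are $z_5,y_6$, giving $(\varepsilon(z,x)+\varepsilon(z,z))-(\varepsilon(z,z)+\varepsilon(z,y))=0$. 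The remaining seven cases are identical in spirit.

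The only real work is this nine-fold bookkeeping, and the one place to be careful is sorting the four edges at $v_s$ into the ``initial'' and ``terminal'' sums according to the fixed star orientations; the $D_7$-reduction is what keeps the check short. It is worth noting that the verification is not a formality: because $\varepsilon(x,y)=-1$ while $\varepsilon(x,z)=\varepsilon(y,z)=\varepsilon(z,z)=1$, the function $\varepsilon$ is not of the product form $s(a)s(b)$ for any sign function $s$ on the three edge types, so the homomorphism property genuinely depends on the cancellations above.
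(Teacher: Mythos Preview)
Your proposal is correct, and the underlying computation is the same one the paper carries out: showing $\varepsilon\bigl(\delta(V^{e_i,v_s})\bigr)=0$ for all admissible $(e_i,v_s)$ is exactly the content of invariance under the fifth Reidemeister move, as you observe. The paper's own proof does this in the Reidemeister-move language (R1--R4 are immediate, then one example of R5 is written out and the rest is left as ``easy to check in each case''), and the Remark immediately after the Lemma notes that the coefficients define a homomorphism $L(K_7)\to\mathbb Z$, which is precisely your framing.

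Where your treatment differs is in organization. The paper does not exploit the $D_7$-symmetry and simply asserts that all cases can be checked; you use the dihedral action to reduce the verification to nine explicit cases (three edge types times three orbits of the off-edge vertex under the edge stabilizer), and your two sample computations are correct. This is a cleaner bookkeeping device than what the paper provides, though it buys the same result by the same mechanism. Your closing remark that $\varepsilon$ is not of product form $s(a)s(b)$, and hence the vanishing is a genuine cancellation rather than a formality, is a nice observation not made in the paper.
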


\begin{proof}  It is easy to check that $\widehat{L}_{\varepsilon}(f)$ is invariant under the first four Reidemeister moves.

 In order to show that $\widehat{L}_{\varepsilon}(f)$ is invariant under the fifth move, we must show that the value is unchanged when any edge of $f(K_7)$ is pulled over or under a given vertex $v$.  An example is illustrated in Figure~\ref{FigureK7_loopover}.  

\begin{figure}[here]
\begin{center}
\includegraphics[width=0.55\textwidth]{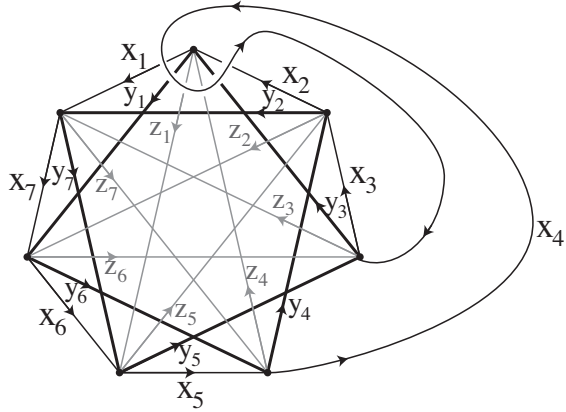}
\caption{$K_7$ with an edge pulled over a vertex.}
\label{FigureK7_loopover}
\end{center}
\end{figure}

Pulling a given edge $e$ over a vertex will generate six new crossings. In Figure~\ref{FigureK7_loopover} the edge $x_4$ has new crossings with the edges $x_1$, $y_1$, $z_1$, $x_2$, $y_3$, and $z_4$.  The crossings with edges pointed away from the vertex $v$ ($x_1$, $y_1$, $z_1$) will have an opposite sign compared to the crossings with edges pointed toward the vertex $v$ ($x_2$, $y_3$, $z_4$).  Thus, the overall change in $\widehat{L}_{\varepsilon}(f)$ is found by adding the epsilon coefficients for the crossings of $x_4$ with $x_1$, $y_1$, and $z_1$ and subtracting the epsilon coefficients for the crossings of $x_4$ with $x_2$, $y_3$, and $z_4$.  It is easy to check that $\widehat{L}_{\varepsilon}(f)$ is unchanged in each case.  \end{proof}
\medskip

It follows from Lemma~\ref{k7 invariant}, that $\widehat{L}_{\varepsilon}(f)$ is a generalized Simon invariant.  

\begin{Remark}\rm{ One can check that the epsilon coefficients we have given for $K_7$ define a homomorphism from the free $\mathbb{Z}$-module $L(K_7)$ to $\mathbb{Z}$.  Thus $\widehat{L}_{\varepsilon}(f)$ also gives us a reduced Wu invariant for $K_7$.}
\end{Remark}
\medskip

 We now apply the generalized Simon invariant of $K_7$ to prove that $K_7$ is intrinsically chiral.  This result was previously proven by Flapan and Weaver~\cite{Flapan 1992}, but using the generalized Simon invariant allows us to give a simpler proof which can be generalized to apply to many other graphs.  We begin with a lemma.



\begin{Lemma}\label{k7 odd}
For any embedding $f$ of  $K_7$ in ${S}^3$, the generalized Simon invariant $\widehat{L}_{\varepsilon}(f)$ is an odd number.
\end{Lemma}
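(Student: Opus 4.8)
The plan is to compute $\widehat{L}_{\varepsilon}(f)$ modulo $2$ and show it equals $1$. Since $\widehat{L}_{\varepsilon}(f)$ is an ambient isotopy invariant by Lemma~\ref{k7 invariant}, it suffices to evaluate it for one convenient embedding of $K_7$ — for instance the standard planar embedding, or any embedding whose crossings are easy to enumerate — and check that the total is odd. In fact, the cleanest route is to work mod $2$ from the start, where all the signs disappear: modulo $2$ we have $\ell(f(a),f(b)) \equiv (\text{number of crossings between } a \text{ and } b) \pmod 2$, and each $\varepsilon(a,b) \equiv 1 \pmod 2$ (the coefficients are all $\pm 1$). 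Hence $\widehat{L}_{\varepsilon}(f) \equiv \sum_{a \cap b = \emptyset} c(a,b) \pmod 2$, where $c(a,b)$ is the mod $2$ crossing count between disjoint edges $a$ and $b$. This sum is exactly the mod $2$ self-intersection count of the image of the "deleted product" of the edge set, i.e. the number of crossings between disjoint edges, counted mod $2$, over the whole diagram.

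The key point is that this mod $2$ quantity is itself an invariant of $K_7$ (not just of a fixed oriented $0$-star), and by a standard Van Kampen / deleted-product obstruction argument it equals the Van Kampen obstruction of $K_7$ reduced mod $2$, which is nonzero precisely because $K_7$ is not planar — more concretely, because $K_7$ contains $K_5$ (equivalently $K_{3,3}$) as a subgraph and these carry the mod $2$ obstruction. Rather than invoke that machinery, I would argue directly: the first four Reidemeister moves each change $\sum c(a,b)$ by an even amount (moves I, IV change it by $0$; move II changes two crossings between the same pair of disjoint edges, contributing $0$ mod $2$; move III permutes crossings), and the fifth move changes it by $6 \equiv 0 \pmod 2$ (six new crossings are created, as noted in the proof of Lemma~\ref{k7 invariant}). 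So $\widehat{L}_{\varepsilon}(f) \bmod 2$ is independent of the embedding, and we may compute it on any single diagram.

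For the explicit computation I would take a "book" or "linear" diagram of $K_7$: place the $7$ vertices on a line in order $v_1, \dots, v_7$ and draw every edge $v_iv_j$ as an arc above the line; then two edges $v_iv_j$ and $v_kv_l$ (with $i<j$, $k<l$, $\{i,j\}\cap\{k,l\}=\emptyset$) cross exactly once iff their endpoints interleave, i.e. $i<k<j<l$ or $k<i<l<j$. So $\sum c(a,b) \bmod 2$ equals the number of interleaving (``crossing'') pairs of chords on $7$ points in convex position, mod $2$. This number is $\binom{7}{4} = 35$ — because each set of $4$ of the $7$ points determines exactly one interleaving pair of chords — and $35$ is odd. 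Therefore $\widehat{L}_{\varepsilon}(f) \equiv 35 \equiv 1 \pmod 2$ for every embedding $f$, which is the claim.

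The main obstacle is making the reduction-mod-$2$ invariance clean: one must be slightly careful that move V really does create an \emph{even} number of relevant crossings for every edge and every vertex (the proof of Lemma~\ref{k7 invariant} records six new crossings in the displayed case, and the same count of six holds in general since the moved edge passes over a vertex of degree $6$, meeting each of the $6$ incident edges once — of which the three "disjoint from the moved edge" ones actually matter, but in any case the parity of the \emph{change in the full sum $\sum_{a\cap b=\emptyset} c(a,b)$} is what we need, and pulling an edge $e$ over $v$ creates new crossings of $e$ with edges at $v$, three of which are disjoint from $e$, contributing $3$ — so I should instead note that the \emph{signed} sum is invariant by Lemma~\ref{k7 invariant} and hence so is its mod-$2$ reduction, sidestepping a recount). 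Using Lemma~\ref{k7 invariant} directly to get mod-$2$ invariance, and then the convex-position chord count $\binom{7}{4}=35$, gives the result with no further difficulty.
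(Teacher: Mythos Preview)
Your overall plan matches the paper's: show that the parity of $\widehat{L}_{\varepsilon}(f)$ is independent of the embedding, then compute it on one convenient diagram. Your endgame computation is actually the same number the paper finds---the paper counts $35$ crossings in the convex-position diagram of Figure~\ref{FigureK7_labeled} (14 within the 2-star, 21 between 1-star and 2-star), and your $\binom{7}{4}=35$ is a slicker way to see the same total.

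The gap is in the embedding-independence step, which you never nail down. Your opening sentence (``since $\widehat{L}_{\varepsilon}(f)$ is an ambient isotopy invariant \dots\ it suffices to evaluate it for one convenient embedding'') confuses isotopy invariance with independence of the embedding; Lemma~\ref{k7 invariant} only tells you the value is constant on an isotopy class, not across all embeddings. Your direct attempt via Reidemeister moves on the unsigned count $\sum c(a,b)$ then miscounts move~V twice: pulling an edge $e$ over a vertex $v$ (necessarily $e$ not incident to $v$) produces crossings with all $6$ edges at $v$, but exactly $4$ of those edges are disjoint from $e$ (the two going to the endpoints of $e$ are adjacent, not disjoint). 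So the change to $\sum_{a\cap b=\emptyset} c(a,b)$ is $4$, not $6$ and not $3$---which is still even, so your argument would \emph{work} with the correct count. Finally, your fallback (``the signed sum is invariant by Lemma~\ref{k7 invariant} and hence so is its mod-$2$ reduction'') again only yields isotopy invariance.

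What is missing is the one-line observation the paper uses as its very first sentence: a single crossing change alters $\ell(f(a),f(b))$ by $\pm 2$, hence alters $\widehat{L}_{\varepsilon}(f)$ by $\pm 2\varepsilon(a,b)$, an even number. That, together with Lemma~\ref{k7 invariant}, immediately gives that the parity is the same for every embedding, and then your $\binom{7}{4}=35$ finishes the proof.
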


\begin{proof}
Since any crossing change will change the signed crossing number between two edges by $\pm 2$, we only need to find an embedding $f$ where $\widehat{L}_{\varepsilon}(f)$ is odd.  Consider an embedding of $K_7$ which has Figure~\ref{FigureK7_labeled} as its projection with the intersections between edges replaced by crossings.  Note that there are 35 crossings in this embedding of $K_7$: 14 crossings of the 2-star with itself, and 21 crossings between the 1-star and the 2-star.  The epsilon coefficient for every one of these crossings is 1.  Since there is an odd number of crossings, regardless of their signs, $\widehat{L}_{\varepsilon}(f)$ must be odd.  Because any crossing change will change $\widehat{L}_{\varepsilon}(f)$ by an even number, it follows that $\widehat{L}_{\varepsilon}(f)$ is odd for any embedding of $K_7$.
\end{proof}

\medskip

\begin{Theorem}\label{k7 chiral}
$K_7$ is intrinsically chiral.
\end{Theorem}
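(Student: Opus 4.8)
The plan is to combine the invariance of $\widehat{L}_{\varepsilon}$ (Lemma~\ref{k7 invariant}) with its parity (Lemma~\ref{k7 odd}) and the behavior of $\widehat{L}_{\varepsilon}$ under an orientation-reversing homeomorphism. First I would recall that since $\widehat{L}_{\varepsilon}(f)$ is a reduced Wu invariant (by the Remark following Lemma~\ref{k7 invariant}), or alternatively directly from the fact that reversing orientation negates every crossing sign, we have $\widehat{L}_{\varepsilon}(\Phi\circ f) = -\widehat{L}_{\varepsilon}(f)$ for any orientation-reversing self-homeomorphism $\Phi$ of $S^3$.

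Next I would address the subtlety that $\widehat{L}_{\varepsilon}$ depends on a choice of oriented $0$-star, whereas achirality of the embedded graph is a statement about the underlying subset of $S^3$ with no reference to labels or orientations. So suppose, for contradiction, that some embedding $f$ of $K_7$ is achiral, realized by an orientation-reversing homeomorphism $h$ of $(S^3, f(K_7))$. Fix an oriented $0$-star for $K_7$ and hence the epsilon coefficients as defined above. The map $h$ induces an automorphism $\tau$ of the abstract graph $K_7$; I would argue that $\tau$ carries the chosen oriented $0$-star to another oriented Hamiltonian cycle, which — because every automorphism of $K_7$ is induced by a permutation of the seven vertices, and the classification of edges into $0$-, $1$-, $2$-stars is determined by the cyclic order of vertices along the $0$-star — again has the property that its $1$-star and $2$-star partition the remaining edges in the required pattern. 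The key point is that $\widehat{L}_{\varepsilon}$, computed relative to the $\tau$-image oriented $0$-star, equals $\widehat{L}_{\varepsilon}(f)$ computed relative to the original: this is essentially the statement that $\widehat{L}_{\varepsilon}$ does not actually depend on which oriented $0$-star we picked, only on the embedding, which follows because the epsilon coefficients are defined symmetrically in a way invariant under the dihedral symmetries of the labeling and under the vertex-permutation action. Then $h$ realizes $f\circ\tau^{-1}$ as $\Phi\circ f'$ for an orientation-reversing $\Phi$ and an embedding $f'$ equivalent to $f$, giving $\widehat{L}_{\varepsilon}(f) = \widehat{L}_{\varepsilon}(f') = \widehat{L}_{\varepsilon}(\Phi\circ f') = -\widehat{L}_{\varepsilon}(f)$, so $\widehat{L}_{\varepsilon}(f) = 0$.

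This contradicts Lemma~\ref{k7 odd}, which says $\widehat{L}_{\varepsilon}(f)$ is odd and hence nonzero. Therefore no embedding of $K_7$ is achiral, i.e., $K_7$ is intrinsically chiral.

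The main obstacle is the bookkeeping in the previous paragraph: making precise that $\widehat{L}_{\varepsilon}(f)$ is genuinely independent of the choice of oriented $0$-star (equivalently, that it is preserved by every graph automorphism of $K_7$, up to the overall sign coming only from orientation reversal of $S^3$). This requires checking that the partition of pairs of disjoint edges into the types $(x,x)$, $(y,y)$, $(z,z)$, $(x,y)$, $(x,z)$, $(y,z)$ and the parallel/anti-parallel distinctions are all permuted correctly — or absorbed harmlessly — under relabeling; once that is in hand, the rest is immediate. I would handle this either by a direct symmetry argument on the vertex-permutation action, or by appealing to the reduced-Wu-invariant formalism, where $\widehat{L}_{\varepsilon}(f) = \tilde{\mathcal L}_{\varepsilon}(f) = \varepsilon(\mathcal L(f))$ and $\mathcal L(f)$ transforms by the induced map on $L(K_7)$, reducing everything to checking that $\varepsilon$ is compatible with that induced automorphism.
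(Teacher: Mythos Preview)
Your approach diverges from the paper's at exactly the point you flag as ``the main obstacle,'' and that obstacle is real: the claim that $\widehat{L}_{\varepsilon}(f)$ is independent of the choice of oriented $0$-star (equivalently, that $\widehat{L}_{\varepsilon}(f\circ\tau)=\widehat{L}_{\varepsilon}(f)$ for every $\tau\in\mathrm{Aut}(K_7)\cong S_7$) is neither proved in your proposal nor established anywhere in the paper. The epsilon coefficients are certainly \emph{not} $S_7$-invariant as a function on pairs of edges: the vertex permutation $i\mapsto 2i\pmod 7$ cyclically permutes the three stars $x\to y\to z\to x$, so it carries an $(x,y)$-pair (with $\varepsilon=-1$) to a $(y,z)$-pair (with $\varepsilon=+1$). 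Whether the resulting discrepancy happens to vanish after passing to the quotient $L(K_7)$ is a genuine computation in a rank-$36$ module that you have not carried out, and your sketch (``a direct symmetry argument on the vertex-permutation action'' or ``checking that $\varepsilon$ is compatible with that induced automorphism'') does not indicate how it would go. The paper's treatment of $K_7$ pointedly does \emph{not} invoke Proposition~\ref{intrinsic chirality}, whose hypothesis is precisely this automorphism-invariance of the epsilon coefficients; that is strong evidence the hypothesis fails here.

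The paper sidesteps the issue entirely. Rather than demanding invariance under all of $S_7$, it uses the Conway--Gordon theorem: the number of Hamiltonian cycles with nonzero Arf invariant in any embedding of $K_7$ is odd, so the action of $h$ on that set has an odd orbit, and some odd power $h^n$ fixes a particular Hamiltonian cycle $C$ setwise. One then \emph{chooses} $C$ as the $0$-star. Now $\alpha^n$ lies in the dihedral subgroup preserving $C$, and for such automorphisms the $0$-, $1$-, $2$-star partition is preserved and the epsilon coefficients genuinely match; the rest follows as in your outline. So the missing ingredient in your argument is exactly what Conway--Gordon supplies: a way to arrange that the induced automorphism preserves the $0$-star, rather than an unverified claim that this doesn't matter.
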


\begin{proof}  For the sake of contradiction, suppose that for some embedding $f$ of $K_7$ there is an orientation reversing homeomorphism $h$ of the pair (${S}^3$, $f(K_7)$).  Let $\alpha$ denote the automorphism of $K_7$ that is induced by $h$. 

 Let $J$ denote the set of Hamiltonian cycles in $f(K_7)$ with non-zero Arf invariant.  Since any homeomorphism of ${S}^3$ preserves the Arf invariant of a knot, the homeomorphism $h$ permutes the elements of $J$.  It follows from Conway and Gordon~\cite{Conway 1983} that $|J|$ must be odd, and hence there is an orbit $O$ in $J$ such that $|O| = n$ for some odd number $n$.  Consequently, $h^n$ setwise fixes an element of $O$.  Hence some Hamiltonian cycle $C$ with non-zero Arf invariant is setwise fixed by $h^n$.  We now label and orient the edges of $K_7$ as in Figure~\ref{FigureK7_labeled} so that $f$ takes the 0-star of $K_7$ to $C$.  Since $h^n$ leaves $C$ setwise invariant, the automorphism $\alpha^ n$ (induced on $K_7$ by $h^n$) leaves the 0-star, 1-star, and 2-star all setwise invariant.  
 
 Fix a sphere of projection $P$ in $S^3$.  Since $f\circ \alpha^n(K_7)$ and $f(K_7)$ are identical as subsets of $S^3$, their projections on $P$ are the same.   Furthermore, if $\alpha^n$ preserves the orientation of the 0-star, then $\alpha^n$ preserves the orientation of the 1-star and 2-star and hence of every edge.  Otherwise, $\alpha^n$ reverses the orientation of every edge.  In either case, a given crossing in the projection has the same sign whether it is considered with orientations induced by $\alpha^n(K_7)$ or with orientations induced by $K_7$.  Furthermore, since $\alpha^n$ leaves the 0-star, 1-star, and 2-star of $K_7$ setwise invariant, each crossing has the same epsilon coefficient, whether the crossing is considered in $f\circ \alpha^n(K_7)$ or in $f(K_7)$.  It follows that $\widehat{L}_{\varepsilon}(h^n\circ f)=\widehat{L}_{\varepsilon}(\alpha^n\circ f)=\widehat{L}_{\varepsilon}(f)$.
 
Let $\rho$ denote a reflection of $S^3$ which pointwise fixes the sphere of projection $P$.  Using orientations induced by $K_7$, we see that the sign of every crossing in the projection of $\rho\circ f(K_7)$ on $P$ is the reverse of that of the corresponding crossing in the projection of $f(K_7)$.  Using the oriented 0-star from $K_7$, it follows that $\widehat{L}_{\varepsilon}(\rho\circ f)=-\widehat{L}_{\varepsilon}(f)$. On the other hand, since $n$ is odd $h^n$ is orientation reversing and is thus isotopic to $\rho$.  Hence by Lemma~\ref{k7 invariant},  $\widehat{L}_{\varepsilon}(\rho\circ f)=\widehat{L}_{\varepsilon}(h^n\circ f)$.  Consequently, $\widehat{L}_{\varepsilon}(h^n\circ f)=-\widehat{L}_{\varepsilon}(f)$.   Thus $\widehat{L}_{\varepsilon}(f)=0$, which contradicts Lemma~\ref{k7 odd}.  Hence in fact, $K_7$ is intrinsically chiral.\end{proof} 

\medskip

 \begin{Corollary}\label{K4n+3} For every odd number $n$, the complete graph $K_{4n+3}$ is intrinsically chiral.\end{Corollary}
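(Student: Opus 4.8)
The plan is to reduce the chirality of $K_{4n+3}$ to that of $K_7$ by exhibiting $K_7$ as a kind of ``topological minor'' that carries enough structure to run the same argument. The key point is that the proof of Theorem~\ref{k7 chiral} used only two ingredients about $K_7$: (1) the Conway--Gordon type fact that the number of Hamiltonian cycles with non-zero Arf invariant is odd, which gives a setwise-fixed cycle under a suitable power of the putative orientation-reversing homeomorphism; and (2) the existence of a generalized Simon invariant $\widehat{L}_{\varepsilon}$ that is always odd. For general odd $n$, I would first invoke the generalization of Conway--Gordon to $K_{4n+3}$: every embedding of $K_{4n+3}$ in $S^3$ contains an odd number of Hamiltonian cycles with non-zero Arf invariant. (This is a known extension of \cite{Conway 1983}, and should be cited as such.) This immediately gives, exactly as before, a Hamiltonian cycle $C$ in $f(K_{4n+3})$ that is setwise fixed by some odd power $h^m$ of the homeomorphism $h$.

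Next I would set up the analogue of the $0$-star/$1$-star/$\dots$ decomposition. Labelling the vertices cyclically $0,1,\dots,4n+2$ so that $C$ is the cycle through consecutive labels, the edges of $K_{4n+3}$ partition into ``$k$-stars'' for $k=0,1,\dots,2n+1$, where the $k$-star consists of the edges joining vertices at cyclic distance $k$; each $k$-star with $k\ge 1$ is itself a single Hamiltonian cycle (since $4n+3$ is coprime to $k$), and we orient each consistently with $C$. An automorphism that fixes $C$ setwise either rotates or reflects the cyclic order, and in either case it permutes the $k$-stars among themselves (in fact fixing each one setwise) and either preserves or reverses all edge orientations simultaneously. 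So the strategy for the epsilon coefficients is: choose $\varepsilon(a,b)$ to depend only on the unordered pair of star-indices of $a$ and $b$ together with a parallel/anti-parallel/neither distinction, so that it is automatically invariant under the symmetries that can be induced by $h^m$. Then the argument in Theorem~\ref{k7 chiral} goes through verbatim to conclude $\widehat{L}_{\varepsilon}(f)=0$, provided we also know $\widehat{L}_{\varepsilon}(f)$ is always odd.

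The two things that need real work are therefore: showing such an $\varepsilon$ is a \emph{generalized Simon invariant} (i.e. invariant under the fifth Reidemeister move — pulling an edge over a vertex), and showing it is \emph{always odd}. For the fifth-move check, pulling an edge $e$ over a vertex $v$ of degree $4n+2$ creates $4n$ new crossings of $e$ with the other $4n+1$ edges at $v$ (one edge at $v$ being $e$ itself), in canceling pairs according to orientation; invariance amounts to a linear condition on the $\varepsilon$-values, one condition for each (edge-type of $e$, vertex-orbit) pair, and the natural approach is to pick the coefficients so these conditions hold --- e.g. by taking the coefficients from the homomorphism $L(K_{4n+3})\to\mathbb{Z}$ picture, as was done for $K_6$ and $K_7$, which guarantees the relations automatically. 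For oddness, I would do exactly what the $K_7$ proof did: exhibit one embedding whose projection is the ``standard'' cyclic picture of $K_{4n+3}$ with all crossings being between the higher stars, count them, and check the total of the $\varepsilon$-weighted crossings is odd; since any crossing change alters $\widehat{L}_{\varepsilon}$ by an even multiple of some $\varepsilon$-value, oddness is then embedding-independent as long as the relevant $\varepsilon$-values that can be hit are odd. Concretely I expect to assign $\varepsilon(x_i,x_j)$-type coefficients that are odd and arrange the standard-projection count to be odd; the bookkeeping of ``which $\varepsilon$-values occur at a crossing-changeable pair'' is where I expect the main obstacle to lie, together with verifying the fifth-move linear system has a solution with all the needed parities --- this is the genuinely new computation, whereas the chirality deduction itself is a faithful copy of the proof of Theorem~\ref{k7 chiral}. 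I would streamline the writeup by stating a general lemma: if a graph $G$ admits a generalized Simon invariant that is always odd and every embedding of $G$ contains an odd number of Hamiltonian cycles with non-zero Arf invariant (or more generally a family of cycles permuted by every self-homeomorphism with odd total count on which the invariant is constant), then $G$ is intrinsically chiral; Corollary~\ref{K4n+3} is then immediate once the two hypotheses are verified for $K_{4n+3}$.
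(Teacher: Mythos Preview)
Your proposal is a program, not a proof, and it misses the paper's much simpler argument. The paper does \emph{not} build a new generalized Simon invariant for $K_{4n+3}$, nor does it invoke any Conway--Gordon type result for $K_{4n+3}$. Instead it reduces directly to Theorem~\ref{k7 chiral} via a counting trick: if $h$ is an orientation-reversing homeomorphism of $(S^3,f(K_{4n+3}))$, replace $h$ by an odd power so that the induced automorphism has order a power of $2$; then observe that the number of $K_7$ subgraphs of $K_{4n+3}$ is $\binom{4n+3}{7}$, which is odd when $n$ is odd (the paper checks this by pulling out the factors of $2$ from $7!$). An automorphism of $2$-power order acting on a set of odd cardinality must fix an element, so some embedded $K_7$ is setwise invariant under an orientation-reversing homeomorphism, contradicting Theorem~\ref{k7 chiral}. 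That is the entire proof.

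Your route, by contrast, leaves two substantial obligations unfulfilled. First, the assertion that every embedding of $K_{4n+3}$ has an odd number of Hamiltonian cycles with non-zero Arf invariant is not the Conway--Gordon theorem and is not something you can simply cite; establishing it for general $n$ is nontrivial. Second, and more seriously, you have not constructed the $\varepsilon$-coefficients for $K_{4n+3}$: you describe the linear system imposed by move~(V) and the parity constraints needed for oddness, but you do not solve either, and you yourself flag this as ``the main obstacle.'' Without an explicit $\varepsilon$ satisfying both invariance and oddness, there is no proof. Even if these steps could be carried out, the result would be far longer than the paper's five-line argument; the key idea you are missing is that intrinsic chirality of $K_7$ already propagates upward by a pure subgraph-count, so no new invariant is needed.
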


\begin{proof}  Suppose that for some embedding $f$ of $K_{4n+3}$ in $S^3$, there is an orientation reversing homeomorphism $h$ of (${S}^3$, $f(K_{4n+3})$).  Even though in general the homeomorphism $h$ will not have finite order, the automorphism that $h$ induces on $K_{4n+3}$ does have finite order and its order can be expressed as $2^ab$ for some odd number $b$.  Now $g=h^b$ is an orientation reversing homeomorphism of (${S}^3$, $f(K_{4n+3})$) which induces
an automorphism of $K_{4n+3}$ of order $2^a$.

Observe that the number of $K_7$ subgraphs in $K_{4n+3}$ is 

$$\frac{(4n+3)(4n+2)(4n+1)(4n)(4n-1)(4n-2)(4n-3)}{7!}$$
$$=\frac{(4n+3)(2n+1)(4n+1)(n)(4n-1)(2n-1)(4n-3)}{315}.$$

\medskip

 \noindent This number is odd, since $n$ is odd.  Thus $g$ leaves invariant some $K_7$ subgraph.  But this is impossible since by Theorem \ref{k7 chiral}, $K_7$ is intrinsically chiral.\end{proof}

\medskip

\subsection*{Mobius ladders}
\label{sec:mobius}

A M\"{o}bius ladder $M_n$ with $n$ rungs is the graph obtained from a circle with $2n$ vertices by adding an edge between every pair of antipodal vertices.  Let $N\geq 2$, and consider the oriented labeled graph of $M_{2N+1}$ illustrated in Figure~\ref{FigureM_2N+1} (note there is no vertex at the center of the circle).  We denote the ``outer edges'' consecutively as $x_1, x_2, ..., x_{2(2N+1)}$, and the ``inner edges'' consecutively as $y_1, y_2, ..., y_{2N+1}$.  Since $N \geq 2$, it follows from Simon~\cite{S86} that there is no automorphism of $M_{2N+1}$ which takes an outer edge to an inner edge.  Thus, the distinction between inner and outer edges does not depend on any particular labeling.

\begin{figure}[here]
\begin{center}
\includegraphics[width=0.35\textwidth]{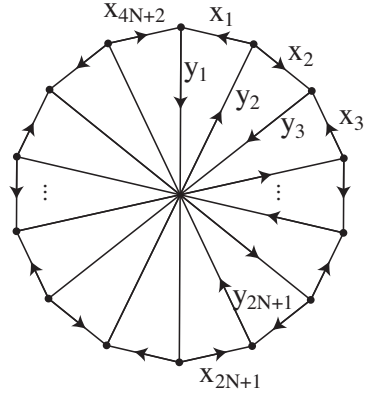}
\caption{An oriented $M_{2N+1}$.}
\label{FigureM_2N+1}
\end{center}
\end{figure}

For any pair of edges $a$ and $b$, let the minimal outer edge distance $d(a,b)$ be defined as the minimum number of edges of any path between $a$ and $b$ using only outer edges (not counting $a$ and $b$).  For $M_{2N+1}$, note that $d(x_i, x_j) \leq 2N$ for any $i, j$.  We define the epsilon coefficient $\varepsilon(a,b)$ of a pair of disjoint edges $a$ and $b$ by:

\begin{equation*}
	\varepsilon(x_i,x_j) = 
	\begin{cases}
		2 & \text{if } d(x_i,x_j) \text{ is odd and } d(x_i,x_j) \neq 2N-1\\
		-1 & \text{if } d(x_i,x_j) = 2N\\
		1 & \text{otherwise}
	\end{cases}
\end{equation*}
\medskip

\begin{equation*}
	\varepsilon(x_i,y_j) = 
	\begin{cases}
		2 & \text{if } d(x_i,y_j) = 1\\
		3 & \text{if } d(x_i,y_j) \geq 2
	\end{cases}
\end{equation*}
\medskip

\begin{equation*}
	\varepsilon(y_i,y_j) = 
	\begin{cases}
		2 & \text{if } d(y_i,y_j) = 1\\
		5 & \text{if } d(y_i,y_j) = 2\\
		6 & \text{if } d(y_i,y_j) \geq 3.
	\end{cases}
\end{equation*}
\medskip

\noindent For any embedding $f: M_{2N+1} \rightarrow {S}^3$ with a regular projection, define:
\medskip

\[
\widehat{L}_{\varepsilon}(f)  = \sum_{a\cap b=\emptyset} \varepsilon(a,b) \ell(f(a),f(b)).
\]
\medskip

\begin{Remark}\rm{ This definition of $\widehat{L}_{\varepsilon}(f)$ does not reduce to the original Simon invariant for $N=1$.}\end{Remark}
\medskip

\begin{Theorem}\label{mobius invariant}
For $N \geq 2$ and any embedding $f$ of $M_{2N+1}$ in ${S}^3$, $\widehat{L}_{\varepsilon}(f) $ is independent of labeling and orientation, and invariant under ambient isotopy of $f(M_{2N+1})$.
\end{Theorem}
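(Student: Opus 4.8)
The plan is to prove the two assertions of the theorem separately: first that $\widehat{L}_{\varepsilon}(f)$ is an ambient isotopy invariant, and then that its value is unaffected by the choice of labeling and orientation of $M_{2N+1}$. For the first, I would follow the strategy of the proof of Lemma~\ref{k7 invariant} and check that $\widehat{L}_{\varepsilon}(f)$ is unchanged under each of the five Reidemeister moves of Figure~\ref{Figuremy5radmoves}. Moves I--IV are routine: a move of type I or II on a single edge produces a self-crossing, which is not counted because the sum is over \emph{disjoint} pairs, while a move of type II between two disjoint edges produces two crossings of opposite sign whose contributions cancel, and a move of type III preserves every signed crossing number $\ell(f(a),f(b))$. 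All the content is in the fifth move.

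Pulling a portion of an edge $e$ across a vertex $v$ that is disjoint from $e$ creates one new crossing of $f(e)$ with each edge incident to $v$; a crossing with an edge that is not disjoint from $e$ contributes nothing to $\widehat{L}_{\varepsilon}$, while a crossing with an edge $e'$ that is incident to $v$ and disjoint from $e$ contributes $\pm\varepsilon(e,e')$, with sign determined by whether $e'$ points away from or toward $v$ (uniformly over all such $e'$). Since $M_{2N+1}$ is $3$-valent and its outer cycle is coherently oriented, exactly one of the two outer edges at $v$ points toward $v$ and one points away; writing $x_{\mathrm{in}},x_{\mathrm{out}}$ for these and $y$ for the rung at $v$, invariance under the fifth move at $(e,v)$ is exactly the identity
\[
\varepsilon(e,x_{\mathrm{out}})\;\pm\;\varepsilon(e,y)\;=\;\varepsilon(e,x_{\mathrm{in}}),
\]
with the convention that a term is dropped whenever the corresponding pair of edges is not disjoint. (This is the same as saying that $\varepsilon$ annihilates the relators $\delta(V^{e,v})$, so that $\varepsilon$ descends to a homomorphism $L(M_{2N+1})\to{\mathbb Z}$ and $\widehat{L}_{\varepsilon}$ coincides with the reduced Wu invariant $\tilde{\mathcal L}_{\varepsilon}$; but verifying the homomorphism property requires precisely the same computation.)

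By the cyclic rotational symmetry of $M_{2N+1}$ it suffices to establish this identity for a single vertex $v$, and then to run through the possibilities for the moved edge $e$. A short computation on the outer cycle of length $4N+2$ shows that for any third outer edge the minimal outer edge distances to the two consecutive outer edges $x_{\mathrm{in}},x_{\mathrm{out}}$ differ by exactly $1$, and similarly one controls $d(e,y)$; this reduces the verification to a finite list of cases according to whether $e$ is an outer edge or a rung and according to the values of $d(e,x_{\mathrm{in}})$, $d(e,x_{\mathrm{out}})$, $d(e,y)$. In each case one substitutes the piecewise definitions of the three $\varepsilon$ functions and checks the identity directly. The hard part will be this case analysis: the cases that need care are those in which one of the distances equals $2N$ or $2N-1$, where $\varepsilon$ is defined irregularly, and the boundary cases in which $e$ shares a vertex with one of the three edges at $v$ so that a term is dropped. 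The specific integer values assigned to $\varepsilon$ are exactly what make every case balance, and confirming this is the main obstacle in the proof.

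Finally, for independence of labeling and orientation, the key point is that $\varepsilon(a,b)$ is defined solely in terms of whether $a$ and $b$ are inner or outer edges and of the minimal outer edge distance $d(a,b)$, and both of these data are preserved by every automorphism of $M_{2N+1}$ --- here using the cited result of Simon that for $N\ge 2$ no automorphism of $M_{2N+1}$ interchanges inner and outer edges, so that ``inner'' and ``outer'' are intrinsic. Hence if two labelings are related by an automorphism $\phi$, then re-indexing the sum $\widehat{L}_{\varepsilon}(f)=\sum_{a\cap b=\emptyset}\varepsilon(a,b)\ell(f(a),f(b))$ by $\phi$ and using $\varepsilon(\phi a,\phi b)=\varepsilon(a,b)$ shows the two values agree. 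Since any two admissible labelings and orientations of $M_{2N+1}$ differ by an automorphism possibly followed by a reversal of all edge orientations, and since reversing all orientations reverses both strands at every crossing between distinct edges and hence leaves every $\ell(f(a),f(b))$ --- and therefore $\widehat{L}_{\varepsilon}(f)$ --- unchanged, the value of $\widehat{L}_{\varepsilon}(f)$ is well defined, completing the proof.
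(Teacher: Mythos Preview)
Your overall plan is exactly the paper's: verify invariance under Reidemeister moves I--V (with I--IV routine and V the substantive step), then use the structure of $\mathrm{Aut}(M_{2N+1})$ for label/orientation independence. The one concrete error is your assumption that the outer cycle in Figure~\ref{FigureM_2N+1} is coherently oriented. In the paper's orientation the outer edges alternate direction, so at every vertex $v$ both adjacent outer edges point the \emph{same} way relative to $v$ and the rung points the opposite way; this is why the paper can say ``without loss of generality the adjacent outer edges point towards $v$''. Consequently the identity to be verified for Move~V is not $\varepsilon(e,x_{\mathrm{out}})\pm\varepsilon(e,y)=\varepsilon(e,x_{\mathrm{in}})$ but rather
\[
\varepsilon(e,x_{\mathrm{far}})+\varepsilon(e,x_{\mathrm{near}})=\varepsilon(e,y),
\]
with the usual convention that a term is dropped when the pair of edges is not disjoint. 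Your version already fails for an outer edge $e$ with $d(e,v)=2$: the relevant epsilon values are $1$, $2$, $3$, and $1+2=3$ holds while $|1-2|=1\neq 3$. Once you correct the orientation convention, your proposed case analysis (split by whether $e$ is inner or outer and by the value of $d(e,v)$, with the boundary cases $d=2N-1,\,2N$ handled separately) is precisely what the paper carries out, and your argument for independence of labeling and orientation matches the paper's.
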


\begin{proof}
We first show that $\widehat{L}_{\varepsilon}(f) $ is independent of labeling and orientation.  Since $N \geq 2$, it follows from Simon~\cite{S86} that any automorphism of $M_{2N+1}$ with $N \geq 2$ takes the cycle of outer edges $\overline{x_1 x_2 ... x_{4N+2}}$ to itself, preserving the order of the edges $x_1, x_2, ..., x_{4N+2}$ and thus the edges $y_1, y_2, ..., y_{2N+1}$ as well.  Thus any automorphism either preserves all the arrows in the orientation of $M_{2N+1}$, or reverses all the arrows.  Reversing every arrow would have no effect on the signs of the crossings, so $\widehat{L}_{\varepsilon}(f) $ is independent of labeling and orientation.

As before, it is easy to see that $\widehat{L}_{\varepsilon}(f) $ is invariant under the first four Reidemeister moves.  We show that $\widehat{L}_{\varepsilon}(f) $ is unchanged under the fifth Reidemeister move. Without loss of generality, we may assume that an edge $e$ is pulled over a vertex $v$ and the adjacent outer edges point towards $v$ (see Figure~\ref{FigureM_2N+1_loopover}).  Pulling $e$ over $v$ generates three new crossings: two with outer edges and one with an inner edge.  We must determine the change in $\widehat{L}_{\varepsilon}(f)$ as a result of of these added crossings.

\begin{figure}[here]
\begin{center}
\includegraphics[width=0.3\textwidth]{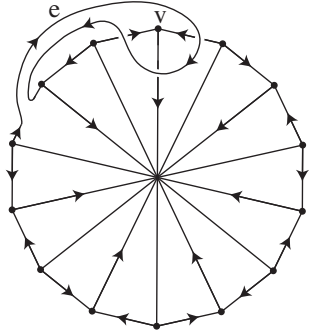}
\caption{$M_{2N+1}$ with an edge $e$ pulled over a vertex $v$.}
\label{FigureM_2N+1_loopover}
\end{center}
\end{figure}

Below we compute the possibilities for this change $\Delta \widehat{L}_{\varepsilon}(f)$,  and show that in all cases this value is zero.  The crossings between the edge $e$ and the two outer edges have the same sign while the crossing of $e$ with an inner edge has the opposite sign.  The epsilon coefficients for the crossings of $e$ with the two outer edges are given in parenthesis (with the edge whose minimal outer edge distance from $e$ is larger given first, and the edge closer to $e$ given second), while the epsilon coefficient of the crossing of $e$ with the inner edge is given afterward. For ease of notation, let $d(e,v)$ denote the minimum number of edges in any path between $e$ and $v$ using only outer edges and not counting $e$.  

\begin{itemize}
\item $e$ is an outer edge
\begin{itemize} 
\item[$\circ$] If $d(e,v) = 1$, then\\
	$\Delta \widehat{L}_{\varepsilon}(f) = (2 + 0) - 2 = 0$
\item[$\circ$] If $d(e,v) = 2, 4, ..., 2N-2$, then\\
	$\Delta \widehat{L}_{\varepsilon}(f) = (1 + 2) - 3 = 0$
\item[$\circ$] If $d(e,v) = 3, 5, ..., 2N-3$, then\\
	$\Delta \widehat{L}_{\varepsilon}(f) = (2 + 1) - 3 = 0$
\item[$\circ$] If $d(e,v) = 2N-1$, then\\
	$\Delta \widehat{L}_{\varepsilon}(f)= (1 + 1) - 2 = 0$
\item[$\circ$] If $d(e,v) = 2N$, then\\
	$\Delta \widehat{L}_{\varepsilon}(f) = (-1 + 1) - 0 = 0$.
\end{itemize}

\item $e$ is an inner edge
\begin{itemize}
\item[$\circ$] If $d(e,v) = 1$, then\\
	$\Delta L(f) = (2 + 0) - 2 = 0$
\item[$\circ$] If $d(e,v) = 2$, then\\
	$\Delta L(f) = (3 + 2) - 5 = 0$
\item[$\circ$] If $d(e,v) \geq 3$, then\\
	$\Delta L(f) = (3 + 3) - 6 = 0$.
\end{itemize}
\end{itemize}
\medskip

Thus $\widehat{L}_{\varepsilon}(f)$ is invariant under the fifth Reidemeister move, and so it is invariant under ambient isotopy\end{proof}

\medskip

It follows that $\widehat{L}_{\varepsilon}(f)$ is a generalized Simon invariant for $M_{2N+1}$.

\medskip

\begin{Lemma}\label{mobius odd}
For any $N \geq 2$ and any embedding $f$ of $M_{2N+1}$ in  ${S}^3$, the generalized Simon invariant $\widehat{L}_{\varepsilon}(f)$ is an odd number.
\end{Lemma}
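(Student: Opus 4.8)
The plan is to argue exactly as in the proof of Lemma~\ref{k7 odd}: reduce the statement to a single model embedding, and compute $\widehat{L}_{\varepsilon}$ modulo $2$ there. First I would note that a crossing change between two \emph{distinct} disjoint edges $a$ and $b$ changes $\ell(f(a),f(b))$ by $\pm 2$, hence changes $\widehat{L}_{\varepsilon}(f)$ by the even number $\pm 2\varepsilon(a,b)$, while a self-crossing change of a single edge leaves $\widehat{L}_{\varepsilon}(f)$ untouched since $\ell$ only records crossings between distinct edges. Because any two diagrams of embeddings of $M_{2N+1}$ differ by a finite sequence of Reidemeister moves and crossing changes, and $\widehat{L}_{\varepsilon}(f)$ is preserved by Reidemeister moves by Theorem~\ref{mobius invariant}, the parity of $\widehat{L}_{\varepsilon}(f)$ is the same for every embedding $f$. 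So it suffices to exhibit one embedding for which $\widehat{L}_{\varepsilon}(f)$ is odd, and by Theorem~\ref{mobius invariant} I am free to choose any embedding and any labeling.

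Next I would take $f_0$ to be the embedding whose projection draws the outer cycle as a round circle and each inner edge (rung) as a chord joining its two antipodal endpoints, then perturbs the chords into general position and assigns over/under information arbitrarily. In this diagram there are no crossings between two outer edges and none between an outer edge and an inner edge; moreover every pair of rungs crosses exactly once, because the four endpoints of any two distinct rungs are distinct and form two antipodal pairs, which therefore occur in the alternating cyclic order $P,Q,P',Q'$ around the circle. Since distinct rungs share no vertex, each of these $\binom{2N+1}{2}$ crossings occurs between a pair of disjoint edges, so $\widehat{L}_{\varepsilon}(f_0)=\sum_{i<j}\pm\varepsilon(y_i,y_j)\equiv\sum_{i<j}\varepsilon(y_i,y_j)\pmod 2$, the sum running over all pairs of rungs. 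As $\varepsilon(y_i,y_j)\in\{2,5,6\}$, the term $\varepsilon(y_i,y_j)$ is odd precisely when $d(y_i,y_j)=2$, so $\widehat{L}_{\varepsilon}(f_0)$ is congruent mod $2$ to the number of pairs of rungs at minimal outer edge distance $2$.

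It then remains to count those pairs. Labelling the $2N+1$ rung hubs consecutively as $0,1,\dots,2N$ around the circle, the two rungs with hubs $a<b$ have $d(y_a,y_b)=\min\{\,b-a,\ 2N+1-(b-a)\,\}$, which equals $2$ exactly when $b-a\in\{2,\,2N-1\}$; for $N\ge 2$ these two values are distinct (so no pair is counted twice) and both are attained. There are $2N-1$ pairs with $b-a=2$ and $2$ pairs with $b-a=2N-1$, giving $2N+1$ pairs in all, which is odd. Hence $\widehat{L}_{\varepsilon}(f_0)$ is odd, and by the first paragraph $\widehat{L}_{\varepsilon}(f)$ is odd for every embedding $f$ of $M_{2N+1}$.

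The main obstacle is the index bookkeeping for the model diagram $f_0$: one must verify carefully that it realizes exactly $\binom{2N+1}{2}$ crossings, all between disjoint pairs of rungs, and translate the definition of $d(\,\cdot\,,\,\cdot\,)$ into the formula $\min\{b-a,\ 2N+1-(b-a)\}$ on hub positions, in particular keeping the cases $b-a=2$ and $b-a=2N-1$ separate. Once that is done, the argument is entirely parallel to the one for $K_7$.
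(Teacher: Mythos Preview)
Your argument is correct. The reduction in your first paragraph is the same as the paper's (parity survives crossing changes, so one embedding suffices); you might add a word that crossings between \emph{adjacent} edges are also irrelevant since the sum defining $\widehat{L}_{\varepsilon}$ runs only over disjoint pairs, but this is a triviality.

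Where you diverge from the paper is in the choice of model embedding. The paper takes the standard ``M\"obius band'' picture of $M_{2N+1}$ (Figure~\ref{FigureM2N1_mobius_embedding_labeled}), which has exactly one crossing, between two outer edges at the maximal outer distance $2N$; since $\varepsilon=-1$ there and the crossing sign is $-1$, one reads off $\widehat{L}_{\varepsilon}(f)=1$ with no counting at all. You instead use the ``chords through a disk'' picture, which produces $\binom{2N+1}{2}$ crossings, all between rungs, and you then have to (i) check that any two diameters interlock and hence cross once after perturbation, (ii) translate $d(y_i,y_j)$ into the cyclic distance $\min(b-a,\,2N+1-(b-a))$ on rung labels, and (iii) count the pairs at distance $2$. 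All three steps are done correctly, and the final count of $2N+1$ odd-$\varepsilon$ crossings is right. The paper's route is shorter because its embedding has a single crossing; your route trades that simplicity for a purely combinatorial computation that does not rely on visualizing the half-twisted band. Both are valid.
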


\begin{proof}
Note that any crossing change of a projection of $f(M_{2N+1})$ will change the signed crossing number between the two edges by $\pm 2$.  Thus any crossing change will alter $\widehat{L}_{\varepsilon}(f)$ by an even number.  Now consider the embedding $f$ of $M_{2N+1}$ shown in Figure~\ref{FigureM2N1_mobius_embedding_labeled}.  There is only one crossing, and it is between two outer edges with an outer edge distance of $2N$ (the maximum).  The epsilon coefficient for this crossing is $-1$, which is multiplied by the crossing sign $-1$ so that $\widehat{L}_{\varepsilon}(f) = (-1)(-1) = 1$ for this embedding.  It follows that $\widehat{L}_{\varepsilon}(f)$ is odd for any embedding of $M_{2N+1}$.\end{proof}

\begin{figure}[here]
\begin{center}
\includegraphics[width=.45\textwidth]{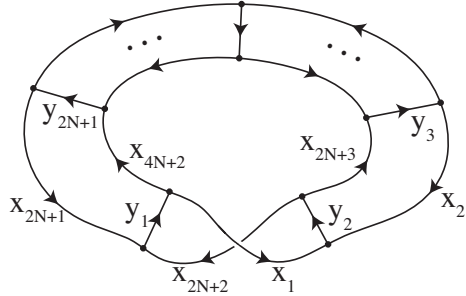}
\caption{An embedding of $M_{2N+1}$ with $\widehat{L}_{\varepsilon}(f) = 1$.}
\label{FigureM2N1_mobius_embedding_labeled}
\end{center}
\end{figure}

\begin{Lemma}\label{mobius automorphism}
Let $N \geq 2$.  If $\alpha$ is an automorphism of $M_{2N+1}$, then the epsilon coefficients of $M_{2N+1}$ and $\alpha(M_{2N+1})$ are the same, and $\alpha$ either preserves the orientation of every edge or reverses the orientation of every edge.
\end{Lemma}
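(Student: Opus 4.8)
The plan is to reduce both assertions to the action of $\alpha$ on the cycle of outer edges, which is already available from the proof of Theorem~\ref{mobius invariant}. By Simon~\cite{S86}, since $N\geq 2$, the automorphism $\alpha$ carries the cycle of outer edges $\overline{x_1 x_2\cdots x_{4N+2}}$ to itself, acting on it as a rotation or a reflection of the $(4N+2)$-cycle. In particular $\alpha$ permutes the inner edges $y_1,\ldots,y_{2N+1}$ among themselves, so it respects the partition of the edge set of $M_{2N+1}$ into outer edges and inner edges, and, being a dihedral symmetry of the outer $(4N+2)$-cycle, it preserves distances measured along that cycle.

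For the epsilon coefficients, I would note that $\varepsilon(a,b)$ is a function only of (i) which of the three types --- outer/outer, outer/inner, inner/inner --- the disjoint pair $\{a,b\}$ belongs to, and (ii) the minimal outer edge distance $d(a,b)$. Both of these data are preserved by $\alpha$: the type because $\alpha$ respects the outer/inner partition, and the distance because $\alpha$ is a symmetry of the outer cycle and $d$ is computed from paths along outer edges only. Hence $\varepsilon(\alpha(a),\alpha(b))=\varepsilon(a,b)$ for every disjoint pair, which is precisely the assertion that the epsilon coefficients of $M_{2N+1}$ and of $\alpha(M_{2N+1})$ coincide.

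For the orientations, the key observation is that the orientation drawn in Figure~\ref{FigureM_2N+1} is the one in which every vertex is a source or a sink; equivalently, every edge is directed from one part to the other of the bipartition of $M_{2N+1}$ --- note $M_{2N+1}$ is bipartite, since $2N+1$ is odd. Since $M_{2N+1}$ is connected, $\alpha$ either preserves the two bipartition classes or interchanges them. In the first case the pushed-forward orientation again directs every edge from the source class to the sink class, hence equals the given orientation, so $\alpha$ preserves the orientation of every edge. In the second case $\alpha$ sends every source to a sink and every sink to a source, so the pushed-forward orientation is the reverse of the given one, and $\alpha$ reverses the orientation of every edge.

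The one point requiring care is the identification of the orientation in Figure~\ref{FigureM_2N+1} with the bipartition-respecting orientation: this is what makes the dichotomy work, and the statement would fail for other choices. For instance, with a coherent orientation of the outer cycle, the rotation of the $(4N+2)$-cycle by $2N+1$ steps --- which fixes every inner edge while reversing it, and fixes every outer edge while preserving a coherent orientation --- would neither preserve all arrows nor reverse all arrows. Once the figure's orientation is pinned down, the bipartite argument makes the orientation dichotomy immediate and the distance argument makes the epsilon-coefficient claim immediate.
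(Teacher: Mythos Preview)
Your argument is correct and, for the epsilon-coefficient claim, essentially identical to the paper's: both reduce to Simon's result that $\alpha$ preserves the outer cycle, hence the outer/inner partition and the outer edge distance $d$, on which $\varepsilon$ depends alone.

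For the orientation claim the paper simply says ``we can see from Figure~\ref{FigureM_2N+1}'' that $\alpha$ either preserves or reverses every arrow, whereas you give an actual reason: $M_{2N+1}$ is bipartite (the outer cycle has even length $4N+2$ and each rung spans the odd distance $2N+1$), the figure's orientation is the source/sink orientation coming from the bipartition, and any automorphism of a connected bipartite graph either fixes or swaps the two colour classes. This is a genuine improvement in clarity over the paper's appeal to the picture, and your closing remark --- that the dichotomy fails for, say, a coherent orientation of the outer cycle --- pinpoints exactly why the choice of orientation matters. The only thing your write-up leaves implicit is the verification that the figure's orientation really is the bipartite one; this is confirmed elsewhere in the paper (in the proof of Theorem~\ref{mobius invariant}, where both outer edges at a vertex are taken to point toward it), so the identification is safe.
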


\begin{proof}  Let $\alpha \in$ Aut$(M_{2N+1})$. Since $N \geq 2$, it follows from Simon~\cite{S86} that $\alpha$ takes the cycle $\overline{x_1 x_2 ... x_{4N+2}}$ to itself, preserving the order of the edges $x_1, x_2, ..., x_{4N+2}$ and thus preserving the order of the edges $y_1, y_2, ..., y_{2N+1}$ as well.  Because the order of the outer edges is preserved, the outer edge distance is also preserved.  The epsilon coefficients depend only on the outer edge distance and the distinction between inner and outer edges, so it follows that the epsilon coefficients of $M_{2N+1}$ and $\alpha(M_{2N+1})$ are the same. 

Finally, we can see from Figure~\ref{FigureM_2N+1} that $\alpha$ either preserves all or reverses all the orientations on edges. \end{proof} 
\medskip

To prove that $M_{2N+1}$ is intrinsically chiral, we will use the following Proposition whose proof is similar to that of Theorem~\ref{k7 chiral}.

\begin{Proposition}\label{intrinsic chirality}
Let $G$ be an oriented graph with a generalized Simon invariant $\widehat{L}_{\varepsilon}(f)$.  Suppose that $\widehat{L}_{\varepsilon}(f)$ is odd for every embedding $f: G \rightarrow {S}^3$, and every automorphism of $G$ preserves the epsilon coefficients of $G$ and either preserves the orientation of every edge or reverses the orientation of every edge. Then $G$ is intrinsically chiral.
\end{Proposition}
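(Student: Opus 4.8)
The plan is to follow the proof of Theorem~\ref{k7 chiral} almost verbatim, the point being that the two hypotheses on automorphisms are exactly what is needed to replace the Conway--Gordon/Arf-invariant argument used there to single out a preferred Hamiltonian cycle. Suppose, for contradiction, that some embedding $f$ of $G$ is achiral, so there is an orientation reversing homeomorphism $h$ of the pair $(S^3,f(G))$. Restricting $h$ to $f(G)$ and transporting through $f$ yields an automorphism $\alpha$ of $G$ with $h\circ f=f\circ\alpha$. Fix a sphere of projection $P\subset S^3$.

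First I would check that $\widehat{L}_{\varepsilon}(h\circ f)=\widehat{L}_{\varepsilon}(f)$. Since $h\circ f=f\circ\alpha$, the subsets $h\circ f(G)$ and $f(G)$ of $S^3$ are equal, hence have the same shadow on $P$. By hypothesis $\alpha$ either preserves the orientation of every edge or reverses the orientation of every edge; in the first case the crossing signs in the projection are literally unchanged, and in the second case each crossing has both of its strands reversed, which again leaves the crossing signs unchanged (reversing one strand at a crossing flips its sign, so reversing both flips it twice). By hypothesis $\alpha$ also preserves the epsilon coefficients, i.e.\ $\varepsilon(\alpha(a),\alpha(b))=\varepsilon(a,b)$ for all disjoint edges $a,b$; the same then holds for $\alpha^{-1}$. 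Re-indexing
\[
\widehat{L}_{\varepsilon}(f\circ\alpha)=\sum_{a\cap b=\emptyset}\varepsilon(a,b)\,\ell\big(f(\alpha(a)),f(\alpha(b))\big)
\]
by the substitution $a\mapsto\alpha(a),\ b\mapsto\alpha(b)$, which is a bijection of the set of disjoint pairs of edges onto itself, then gives $\widehat{L}_{\varepsilon}(f\circ\alpha)=\widehat{L}_{\varepsilon}(f)$, so $\widehat{L}_{\varepsilon}(h\circ f)=\widehat{L}_{\varepsilon}(f)$.

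Next, let $\rho$ be a reflection of $S^3$ fixing $P$ pointwise. The projection of $\rho\circ f(G)$ on $P$ has the same shadow as that of $f(G)$ but with the over- and under-strands interchanged at every crossing, so every crossing sign is reversed while the epsilon coefficients are unaffected; hence $\widehat{L}_{\varepsilon}(\rho\circ f)=-\widehat{L}_{\varepsilon}(f)$. On the other hand $h$, being orientation reversing, is ambient isotopic to $\rho$, and since $\widehat{L}_{\varepsilon}$ is a generalized Simon invariant and hence an ambient isotopy invariant, $\widehat{L}_{\varepsilon}(h\circ f)=\widehat{L}_{\varepsilon}(\rho\circ f)$. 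Combining these three equalities gives $\widehat{L}_{\varepsilon}(f)=-\widehat{L}_{\varepsilon}(f)$, so $\widehat{L}_{\varepsilon}(f)=0$, contradicting the hypothesis that $\widehat{L}_{\varepsilon}(f)$ is odd. Therefore $G$ is intrinsically chiral.

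I do not expect any serious obstacle, since the argument is a direct abstraction of the $K_7$ case. The only step needing a little care is $\widehat{L}_{\varepsilon}(f\circ\alpha)=\widehat{L}_{\varepsilon}(f)$, where one must observe both that reversing all edge orientations preserves every crossing sign and that the re-indexing of the double sum is legitimate precisely because $\alpha$ preserves the epsilon coefficients. Note that, in contrast to Theorem~\ref{k7 chiral}, there is no need here to pass to an odd power $h^n$: the hypothesis that \emph{every} automorphism of $G$ preserves the epsilon coefficients and acts uniformly on orientations plays, for a general $G$, the role that the existence of a setwise-fixed Hamiltonian cycle with non-zero Arf invariant played for $K_7$.
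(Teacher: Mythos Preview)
Your proof is correct and follows essentially the same approach as the paper's own proof. The paper argues identically: it sets up the contradiction, shows $\widehat{L}_{\varepsilon}(h\circ f)=\widehat{L}_{\varepsilon}(f)$ using the two hypotheses on automorphisms, shows $\widehat{L}_{\varepsilon}(\rho\circ f)=-\widehat{L}_{\varepsilon}(f)$ for a reflection $\rho$, and then uses that $h$ is isotopic to $\rho$ to conclude $\widehat{L}_{\varepsilon}(f)=0$; your version is in fact slightly more explicit in justifying the re-indexing step and the effect of simultaneously reversing all edge orientations on crossing signs.
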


\begin{proof} For the sake of contradiction, suppose that for some embedding $f$ of $G$, there is an orientation reversing homeomorphism $h$ of the pair (${S}^3$, $f(G)$).  Let $\alpha$ denote the automorphism that $h$ induces on $G$.  

 Fix a sphere of projection $P$ in $S^3$.  Since $f\circ \alpha(G)$ and $f(G)$ are identical as subsets of $S^3$, their projections on $P$ are the same.   Also, since $\alpha$ either preserves all the edge orientations or reverses all the edge orientations, the sign of every crossing in the projection of the oriented embedded graph $h\circ f(G)$ is the same as it is in the projection of the oriented embedded graph $f(G)$.  Furthermore, by hypothesis each crossing has the same epsilon coefficient, whether the crossing is considered in $f\circ \alpha(G)$ or in $f(G)$.  It follows that $\widehat{L}_{\varepsilon}(h^\circ f)=\widehat{L}_{\varepsilon}(\alpha\circ f)=\widehat{L}_{\varepsilon}(f)$.
 
Let $\rho$ denote a reflection of $S^3$ which pointwise fixes the sphere of projection $P$.  Using orientations induced by $G$, the sign of every crossing in the projection of $\rho\circ f(G)$ is the reverse of that of the corresponding crossing in $f(G)$.  It follows that $\widehat{L}_{\varepsilon}(\rho\circ f)=-\widehat{L}_{\varepsilon}(f)$. On the other hand, since $h$ is orientation reversing it is isotopic to $\rho$.  Hence by by definition of a generalized Simon invariant, $\widehat{L}_{\varepsilon}(\rho\circ f)=\widehat{L}_{\varepsilon}(h\circ f)$.  Consequently, $\widehat{L}_{\varepsilon}(h\circ f)=-\widehat{L}_{\varepsilon}(f)$.   Thus $\widehat{L}_{\varepsilon}(f)=0$, which contradicts our hypothesis that $\widehat{L}_{\varepsilon}(f)$ is odd.  Hence $G$ is intrinsically chiral.  \end{proof}

\medskip

 Flapan~\cite{Flapan 1989} showed that $M_{2N+1}$ is intrinsically chiral.  However, now that result follows as an immediate corollary of Lemmas~\ref{mobius odd},~\ref{mobius automorphism}, and Proposition~\ref{intrinsic chirality}.  

\begin{Corollary}\label{mobius chiral}
$M_{2N+1}$ is intrinsically chiral for $N \geq 2$.
\end{Corollary}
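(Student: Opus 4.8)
The plan is to derive Corollary~\ref{mobius chiral} directly from Proposition~\ref{intrinsic chirality} together with the two lemmas established immediately before it. The key observation is that we have already done all of the substantive work: Theorem~\ref{mobius invariant} tells us that $\widehat{L}_{\varepsilon}(f)$ is a generalized Simon invariant for $M_{2N+1}$ when $N\geq 2$, Lemma~\ref{mobius odd} tells us that $\widehat{L}_{\varepsilon}(f)$ is odd for every embedding $f$, and Lemma~\ref{mobius automorphism} tells us that every automorphism of $M_{2N+1}$ preserves the epsilon coefficients and either preserves the orientation of every edge or reverses the orientation of every edge. So the proof is simply a matter of checking that the hypotheses of Proposition~\ref{intrinsic chirality} are satisfied and invoking it.

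First I would fix $N\geq 2$ and let $G = M_{2N+1}$ with the orientation and labeling of Figure~\ref{FigureM_2N+1} and the epsilon coefficients defined above. Then I would note that $\widehat{L}_{\varepsilon}(f)$ is a generalized Simon invariant for $G$: this is precisely the content of Theorem~\ref{mobius invariant} (and the sentence following it). Next, I would verify the remaining hypotheses of Proposition~\ref{intrinsic chirality}. The condition that $\widehat{L}_{\varepsilon}(f)$ is odd for every embedding $f\colon G\to S^3$ is exactly Lemma~\ref{mobius odd}. The condition that every automorphism of $G$ preserves the epsilon coefficients and either preserves or reverses all edge orientations is exactly Lemma~\ref{mobius automorphism}. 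With all hypotheses checked, Proposition~\ref{intrinsic chirality} yields that $G = M_{2N+1}$ is intrinsically chiral.

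There is essentially no obstacle here; the corollary is a bookkeeping consequence of machinery already in place. If anything, the only point requiring a moment's care is making sure the notion of ``generalized Simon invariant'' used in the statement of Proposition~\ref{intrinsic chirality} matches the one we have verified for $M_{2N+1}$ in Theorem~\ref{mobius invariant} --- but both refer to invariance of $\widehat{L}_{\varepsilon}(f) = \sum_{a\cap b=\emptyset}\varepsilon(a,b)\ell(f(a),f(b))$ under the five Reidemeister moves of Figure~\ref{Figuremy5radmoves}, so this is immediate. One could also remark, for completeness, that the result recovers Flapan's theorem~\cite{Flapan 1989} that $M_{2N+1}$ is intrinsically chiral, now as a one-line consequence rather than a standalone argument.

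\begin{proof}
Fix $N \geq 2$ and let $G = M_{2N+1}$ with the orientation, labeling, and epsilon coefficients $\varepsilon$ described above. By Theorem~\ref{mobius invariant} and the remark following it, $\widehat{L}_{\varepsilon}(f)$ is a generalized Simon invariant for $M_{2N+1}$. By Lemma~\ref{mobius odd}, $\widehat{L}_{\varepsilon}(f)$ is odd for every embedding $f : M_{2N+1} \to S^3$. By Lemma~\ref{mobius automorphism}, every automorphism of $M_{2N+1}$ preserves the epsilon coefficients of $M_{2N+1}$ and either preserves the orientation of every edge or reverses the orientation of every edge. Thus all of the hypotheses of Proposition~\ref{intrinsic chirality} are satisfied, and it follows that $M_{2N+1}$ is intrinsically chiral.
\end{proof}
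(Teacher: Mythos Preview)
Your proposal is correct and matches the paper's own approach exactly: the paper states that the corollary follows immediately from Lemmas~\ref{mobius odd} and~\ref{mobius automorphism} together with Proposition~\ref{intrinsic chirality}, which is precisely what you verify.
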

\medskip

Nikkuni and Taniyama~\cite{Nikkuni 2009} showed that the Simon invariant provides restrictions on the symmetries of a given embedding of $K_5$ or $K_{3,3}$.  For example, they proved that for both $K_5$ and $K_{3,3}$, the transposition of two vertices can be induced by a homeomorphism on an embedding $f$ only if the Simon invariant of the embedding is $\pm 1$.  By contrast we have the following result for $M_{2N+1}$.

\begin{Theorem}\label{mobius symmetries}  Let $N\geq 2$.
Then for any odd integer m, there is an embedding $f$ of $M_{2N+1}$ in ${S}^3$ with $\widehat{L}_{\varepsilon}(f) = m$ such that every automorphism of $M_{2N+1}$ is induced by a homeomorphism of $({S}^3$, $f(M_{2N+1}))$.
\end{Theorem}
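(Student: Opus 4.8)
\noindent\emph{Reduction.} The plan is to produce, for each odd integer $m$, a single embedding $f$ of $M_{2N+1}$ with $\widehat{L}_{\varepsilon}(f)=m$ on which the whole automorphism group acts by homeomorphisms of $(S^3,f(M_{2N+1}))$; since the automorphisms induced by homeomorphisms of the pair form a subgroup, it suffices to realize a generating set. By the result of Simon quoted above, for $N\ge 2$ the group $\mathrm{Aut}(M_{2N+1})$ is the dihedral group of order $8N+4$ acting on the outer cycle $\overline{x_1x_2\cdots x_{4N+2}}$, generated by the rotation $\rho\colon x_i\mapsto x_{i+1}$ together with one reflection $\tau$. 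By Corollary~\ref{mobius chiral} any homeomorphism realizing either generator must be orientation preserving, which is consistent with the construction below.

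\medskip

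\noindent\emph{A symmetric base embedding.} Regard $M_{2N+1}$ as the circulant graph on $\mathbb{Z}_{4N+2}$ with connection set $\{\pm1,\,2N+1\}$, and fix an integer $p$. In the Heegaard torus $T=\{(z,w)\in S^3\subset\mathbb{C}^2:|z|=|w|=1/\sqrt2\}$ put the vertices at $v_j=\bigl(e^{2\pi ij/(4N+2)}/\sqrt2,\ e^{2\pi ijp/(4N+2)}/\sqrt2\bigr)$, embed the outer edge $x_j$ as the subarc of the $(1,p)$-curve from $v_j$ to $v_{j+1}$, and embed the rung joining $v_j$ to $v_{j+2N+1}$ as the arc of the line of slope $(1,p+2)$ running from $v_j$ to $v_{j+2N+1}$. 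A short check shows the line of slope $(1,p+2)$ through $v_j$ meets the $(1,p)$-curve only at $v_j$ and $v_{j+2N+1}$, and that distinct rungs lie on distinct parallel lines, so this is a genuine embedding $f_p$ of $M_{2N+1}$, lying on $T$ with no crossings on $T$. Now $R(z,w)=\bigl(e^{2\pi i/(4N+2)}z,\ e^{2\pi ip/(4N+2)}w\bigr)$ is an orientation preserving isometry of order $4N+2$ that maps $f_p(M_{2N+1})$ to itself and induces $\rho$; and $\sigma(z,w)=(\overline z,\overline w)$ is an isometry that reverses orientation on each $\mathbb{C}$-factor, hence preserves orientation of $S^3$, maps $f_p(M_{2N+1})$ to itself by sending $v_j$ to $v_{-j}$ (reversing every edge), and induces $\tau$; moreover $\sigma R\sigma=R^{-1}$. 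Thus $\langle R,\sigma\rangle\cong\mathrm{Aut}(M_{2N+1})$ acts on $(S^3,f_p(M_{2N+1}))$ by orientation preserving homeomorphisms, so every automorphism of $M_{2N+1}$ is induced on $f_p$, for every $p$.

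\medskip

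\noindent\emph{Realizing every odd value.} Fix a projection adapted to the standard torus and compute $\widehat{L}_{\varepsilon}(f_p)=\sum_{a\cap b=\emptyset}\varepsilon(a,b)\,\ell(f_p(a),f_p(b))$; by Lemma~\ref{mobius odd} this is odd. The embedding $f_{p+1}$ is obtained from $f_p$ by a meridional Dehn twist of $T$, and I would compute, by tracking how that twist alters the signed crossing numbers between the edges and weighting by the $\varepsilon$-coefficients, that $\widehat{L}_{\varepsilon}(f_{p+1})-\widehat{L}_{\varepsilon}(f_p)$ is a fixed even number $c$. If $c=\pm2$, then since $\widehat{L}_{\varepsilon}(f_0)$ is odd the set $\{\widehat{L}_{\varepsilon}(f_p):p\in\mathbb{Z}\}$ is exactly the set of odd integers and we are done. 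More robustly, it suffices to know $\gcd(c,4N+2)=2$: then $f_0,f_1,\dots,f_{2N}$ realize all $2N+1$ odd residue classes modulo $4N+2$, and, starting from any of them, a crossing change performed simultaneously on the whole $\mathrm{Aut}$-orbit of an antipodal pair of outer edges (orbit size $2N+1$, each coefficient $-1$ by Lemma~\ref{mobius automorphism}) changes $\widehat{L}_{\varepsilon}$ by exactly $\pm(4N+2)$ while preserving the $\langle R,\sigma\rangle$-symmetry; iterating these fills in every odd integer.

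\medskip

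The crux is this last step: because any modification respecting the $(4N+2)$-fold symmetry acts on an entire orbit of crossings, one has to confirm that the symmetric family is nonetheless fine enough to sweep out \emph{every} odd value rather than a single congruence class, which comes down to the explicit evaluation of $\widehat{L}_{\varepsilon}(f_p)$ and of its change under a Dehn twist. The other points—that $\sigma$ genuinely preserves orientation (which it must, $M_{2N+1}$ being intrinsically chiral), and that any incidental perturbations needed to pass to a convenient projection are made $\langle R,\sigma\rangle$-equivariantly—are routine once the framework is set up.
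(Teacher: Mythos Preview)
Your torus construction and the realization of the dihedral symmetry group via $R$ and $\sigma$ are correct and form a nice alternative to the paper's picture, but the argument is incomplete precisely where you flag it. In the paragraph \emph{Realizing every odd value} you never actually compute $c=\widehat{L}_{\varepsilon}(f_{p+1})-\widehat{L}_{\varepsilon}(f_p)$, nor verify either of the hypotheses $c=\pm 2$ or $\gcd(c,4N+2)=2$ on which the rest depends; you write ``I would compute'' and then argue conditionally. Without that computation the proof does not close: the equivariant crossing changes you describe alter $\widehat{L}_{\varepsilon}$ in steps of $4N+2$, so you only reach every odd integer if your base family $\{f_p\}$ already hits every odd residue class modulo $4N+2$, and that is exactly the unverified arithmetic condition on $c$. (There is also a minor point you should make explicit: how one performs the ``simultaneous crossing change on an orbit of antipodal outer-edge pairs'' on an embedding that lives on the torus with no such crossings---presumably by equivariantly inserting twist boxes, but this should be said.)

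By contrast, the paper's proof avoids all of this machinery. It simply takes the standard M\"obius-band picture of $M_{2N+1}$ and replaces the single antipodal outer-edge crossing by a twist region of $2k+1$ crossings of the chosen sign; since that crossing has $\varepsilon=-1$, one gets $\widehat{L}_{\varepsilon}(f)=\pm(2k+1)=m$ immediately, and the dihedral symmetry is visible from the picture. Your framework is more conceptual, but to make it a proof you must either carry out the Dehn-twist computation of $c$ for general $N$, or---much more simply---observe that a single $\langle R,\sigma\rangle$-equivariant twist box on one antipodal pair, combined with the trivial choice of crossing sign, already produces the paper's embedding inside your symmetric model, giving every odd $m$ directly without any appeal to $c$.
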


\begin{proof}Let $m$ be an odd integer, and suppose that $|m| = 2k+1$.  Since any automorphism of $M_{2N+1}$ takes the outer loop $x_1 x_2 ... x_{4N+2}$ to itself~\cite{S86}, the automorphism group Aut($M_{2N+1}$) is the dihedral group $D_{2(4N+2)}$.  This group is generated by a rotation of the outer loop of order $4N + 2$ together with a reflection of the outer loop.  Hence, it suffices to show there is an embedding $f: M_{2N+1} \rightarrow {S}^3$ with $\widehat{L}_{\varepsilon}(f) = m$ such that both of the generators of Aut($M_{2N+1}$) are induced by homeomorphisms of (${S}^3$, $f(M_{2N+1})$).

Consider the embedding of $f: M_{2N+1} \rightarrow {S}^3$ shown in Figure~\ref{FigureM2N1vertaxis}. There are $2k+1$ crossings between a pair of outer edges with an outer edge distance of $2N$.  The epsilon coefficient for each of these crossings is $-1$.  If $m > 0$, we embed $M_{2N+1}$ so that all the crossings have negative sign, otherwise embed $M_{2N+1}$ so that the crossings all have positive sign.  Then $\widehat{L}_{\varepsilon}(f) = (-1)(-1)(2k+1) = 2k+1$ if $m > 0$, and $\widehat{L}_{\varepsilon}(f) = (-1)(+1)(2k+1) = -(2k+1)$ if $m < 0$.  Since $|m| = 2k+1$, it follows that $\widehat{L}_{\varepsilon}(f) = m$.

\begin{figure}[here]
\begin{center}
\includegraphics[width=.45\textwidth]{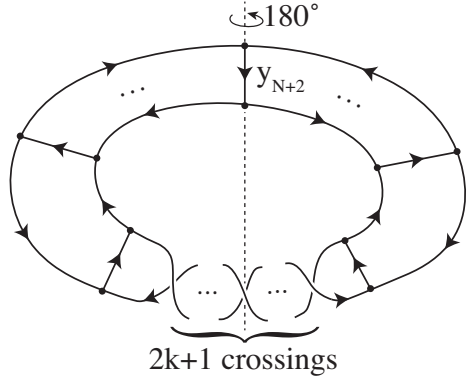}
\caption{An embedding of $M_{2N+1}$ with $\widehat{L}_{\varepsilon}(f) = 2k+1$ with a rotation of the outer loop of order $4N + 2$ together with a reflection of the outer loop.}
\label{FigureM2N1vertaxis}
\end{center}
\end{figure}

By inspection of Figure~\ref{FigureM2N1vertaxis} we see that both generators of Aut($M_{2N+1}$) can be induced by homeomorphisms of (${S}^3$, $f(M_{2N+1})$).\end{proof}

\medskip

Observe that $M_3=K_{3,3}$.  Using our generalized Simon invariant for embeddings of $M_{2N+1}$ $(N \geq 2)$ and the original Simon invariant for embeddings of $M_3$, we now define a topological invariant for embedded Mobius ladders with an even number of rungs (at least 4).  For the remainder of this section, we use $\widehat{L}_{\varepsilon}(f)$ to refer to the Simon invariant if $f$ is an embedding of $M_3$ and to the generalized Simon invariant if $f$ is an embedding of $M_{2N+1}$ for $N \geq 2$.

Let $N \geq 2$ and let $f$ be an embedding of $M_{2N}$ in $S^3$.  For each $i\leq 2N$, let $g_i:M_{2N} \rightarrow S^3$ be the embedding obtained from $f$ by omitting the rung $r_i$ and its vertices from $M_{2N}$.  Note that since $N>1$ the rungs of $M_{2N}$ are setwise invariant under any automorphism~\cite{S86}.  Thus the definition of $g_i$ is unambiguous.  When $N>2$, by Theorem \ref{mobius invariant}, the graph $M_{2N-1}$ has a well defined $\widehat{L}_{\varepsilon}(g_i)$ independent of labeling and orientation.  When $N = 2$, we label each $M_3$ subgraph such that the rungs and outer edges of $M_3$ are contained in the rungs and outer edges of $M_4$ respectively.  Although there are two possible orientations for each embedded $M_3$ subgraph, one can be obtained from the other by reversing the orientation of all edges.  This has no effect on the crossing signs (or epsilon coefficients).  Thus we can unambiguously define:

\[
T_\varepsilon(f) = \sum_{i\leq 2N} \widehat{L}_{\varepsilon}(g_i).
\]

Note that $T_\varepsilon(f)$ is defined on an embedding of the unoriented graph $M_{2N}$.
  \medskip

\begin{Theorem}\label{mobius_even}  For $N\geq 2$ and
any embedding $f$ of $M_{2N}$ in $S^3$, $T_\varepsilon(f)$ is invariant under ambient isotopy.  Furthermore, if $T_\varepsilon(f) \neq 0$, then $f$ is a chiral embedding of $M_{2N}$.
\end{Theorem}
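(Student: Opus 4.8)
The plan is to prove the two assertions separately, in each case reducing to already-available properties of $\widehat{L}_{\varepsilon}$ on $M_{2N-1}$: Theorem~\ref{mobius invariant} when $N\geq 3$ (so that $M_{2N-1}=M_{2(N-1)+1}$ with $N-1\geq 2$), and the ambient-isotopy invariance of the Simon invariant together with the labeling convention fixed just before the theorem when $N=2$ (so that $M_{2N-1}=M_3=K_{3,3}$). Throughout, the organizing fact is that, because $N>1$, every automorphism of $M_{2N}$ permutes its $2N$ rungs among themselves~\cite{S86}; hence for an embedded $M_{2N}$ the rungs form a canonically defined family of $2N$ subarcs, the collection $\{g_i\}_{i\leq 2N}$ of embedded copies of $M_{2N-1}$ obtained by deleting them is canonically defined, and each $\widehat{L}_{\varepsilon}(g_i)$ is a well-defined integer (for $N=2$, using that the inherited rung/outer-edge labeling is determined up to reversing all orientations, which does not change crossing signs).

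For the first assertion I would argue directly. Let $\Phi$ be an ambient isotopy of $S^3$ carrying $f(M_{2N})$ to $f'(M_{2N})$. Since $\Phi$ induces an isomorphism of the embedded graphs, it sends rungs to rungs, hence carries each $g_i(M_{2N-1})$ onto some $g'_{\sigma(i)}(M_{2N-1})$ by an ambient isotopy of $S^3$; by Theorem~\ref{mobius invariant} (respectively by invariance of the Simon invariant) this gives $\widehat{L}_{\varepsilon}(g_i)=\widehat{L}_{\varepsilon}(g'_{\sigma(i)})$, and summing over $i$ yields $T_\varepsilon(f)=T_\varepsilon(f')$.

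For the chirality assertion, suppose for contradiction that $T_\varepsilon(f)\neq 0$ yet there is an orientation-reversing homeomorphism $h$ of $(S^3,f(M_{2N}))$. Then $h$ permutes the rungs, say sending $r_i$ to $r_{\pi(i)}$, and therefore restricts to a homeomorphism $g_i(M_{2N-1})\to g_{\pi(i)}(M_{2N-1})$ that extends to an orientation-reversing homeomorphism of $S^3$. Fixing a projection sphere $P$ and a reflection $\rho$ of $S^3$ fixing $P$ pointwise, one checks, exactly as in the proof of Proposition~\ref{intrinsic chirality}, that $\widehat{L}_{\varepsilon}(\rho(\Gamma))=-\widehat{L}_{\varepsilon}(\Gamma)$ for every embedded $M_{2N-1}$, since every crossing sign on $P$ reverses while the epsilon coefficients are unchanged. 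Writing $h=\psi\circ\rho$ with $\psi$ ambient isotopic to the identity, and using that $\widehat{L}_{\varepsilon}$ on $M_{2N-1}$ is an ambient-isotopy invariant, I obtain $\widehat{L}_{\varepsilon}(g_{\pi(i)})=\widehat{L}_{\varepsilon}(h(g_i(M_{2N-1})))=\widehat{L}_{\varepsilon}(\rho(g_i(M_{2N-1})))=-\widehat{L}_{\varepsilon}(g_i)$ for every $i$. Summing over $i$, the left side reindexes (via $\pi$) to $T_\varepsilon(f)$ while the right side is $-T_\varepsilon(f)$, so $T_\varepsilon(f)=-T_\varepsilon(f)$, whence $T_\varepsilon(f)=0$, contradicting the hypothesis. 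Hence $f$ is chiral.

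The projection and Reidemeister bookkeeping here is entirely subsumed by the cited results, so that is not where I expect the difficulty. The point needing the most care is the case $N=2$: there $\widehat{L}_{\varepsilon}$ is the Simon invariant of $K_{3,3}$, which a priori depends on the vertex labeling, so one must verify that the rung/outer-edge labeling that each $M_3$ subgraph inherits from $M_4$ is preserved --- up to the harmless reversal of all orientations --- both by automorphisms of $M_4$ and by the reflection $\rho$ and the isotopy $\psi$ above; this follows from the structure of $\mathrm{Aut}(M_4)$ recorded in \cite{S86} together with the discussion immediately preceding the theorem. I expect this $N=2$ bookkeeping, rather than anything in the main argument, to be the principal obstacle.
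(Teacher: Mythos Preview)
Your proposal is correct and follows essentially the same route as the paper: both reduce the ambient-isotopy invariance of $T_\varepsilon$ to that of $\widehat{L}_{\varepsilon}$ on each $M_{2N-1}$ subgraph (via Theorem~\ref{mobius invariant} for $N\geq 3$ and the Simon invariant for $N=2$), and both prove the chirality claim by showing that an orientation-reversing homeomorphism negates each summand and hence $T_\varepsilon$ itself. Your explicit tracking of the permutation $\pi$ of the rung-deleted subgraphs and the decomposition $h=\psi\circ\rho$ is slightly more detailed than the paper's presentation, but the substance---including the separate treatment of the $N=2$ labeling issue via the inner/outer edge distinction inherited from $M_4$---is the same.
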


\begin{proof}  By ~\cite{S86}, the cycle of outer edges of $M_{2N}$ is unique.  
Each $\widehat{L}_{\varepsilon}(g_i)$ is invariant under ambient isotopy by Theorem~\ref{mobius invariant} when $N>2$ and by the Simon invariant when $N=2$.  Thus it follows that $T_\varepsilon(f)$ is also invariant under ambient isotopy.  

Let $h$ denote an orientation reversing homeomorphism of $S^3$.  Then $h$ will reverse the signs of all the crossings of $f(M_{2N})$ (and thus each $g_i(M_{2N-1})$).  We now show that the automorphism that $h$ induces on each $M_{2N-1}$ preserves the epsilon coefficients.  If $N\geq3$, then this follows directly from Lemma~\ref{mobius automorphism}.  If instead $N=2$, then by Simon~\cite{S86} the outer edges of $M_{2N}$ are setwise invariant under the automorphism that $h$ induces on $M_{2N}$, so $h$ preserves the distinction between inner and outer edges.  As explained earlier, the edges in each $g_i(M_3)$ subgraph of $f(M_4)$ are labeled as inner or outer in order to match $f(M_4)$.  It follows that $h$ also preserves the distinction between inner and outer edges for each $M_{3}$ subgraph.  For $M_3$, the epsilon coefficients depend only on the distinction between inner and outer edges and on the relative orientation of edges (which is invariant under any automorphism), so the automorphism that $h$ induces on each $M_3$ subgraph preserves the epsilon coefficients.

Since the epsilon coefficients are preserved and the crossings signs are reversed, it follows that each $\widehat{L}_{\varepsilon}(h\circ g_i) = - \widehat{L}_{\varepsilon}(g_i)$ and so $T_\varepsilon(h(f)) = -T(f)$.  If $T_\varepsilon(f) \neq 0$, then $T_\varepsilon(f) \neq -T_\varepsilon(f) = T_\varepsilon(h(f))$, and thus $f(M_{2N})$ is chiral.
\end{proof}
\medskip

\begin{Corollary}
	For all $N \geq 2, m \geq 0$, the embedding $f$ of $M_{2N}$ shown in Figure~\ref{Figurem2n_chiral} is chiral.
\end{Corollary}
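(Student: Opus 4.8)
The plan is to invoke Theorem~\ref{mobius_even}: that theorem says $f$ is chiral as soon as $T_\varepsilon(f)\neq 0$, so it suffices to compute $T_\varepsilon(f)$ for the specific embedding in Figure~\ref{Figurem2n_chiral} and verify that it does not vanish. Recall that $T_\varepsilon(f)=\sum_{i\leq 2N}\widehat{L}_\varepsilon(g_i)$, where $g_i$ is the embedding of $M_{2N-1}$ obtained from $f$ by deleting the rung $r_i$ together with its two endpoints (so the two outer edges of $M_{2N}$ incident to each endpoint of $r_i$ are merged into a single outer edge of $M_{2N-1}$).

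First I would fix the projection of $f$ read off from Figure~\ref{Figurem2n_chiral} and tabulate, for each crossing, the pair of edges involved, the sign of the crossing, and whether each edge is an outer edge or a rung. Then, for each $i$, I would pass to $g_i$: deleting $r_i$ removes exactly those crossings that involve $r_i$, and the merging of outer edges can change which surviving crossed pairs are disjoint and can change their outer-edge distance, hence their epsilon coefficient. For each surviving crossing I would read off the new epsilon coefficient from the definition of $\varepsilon$ for $M_{2N-1}$ (when $N\geq 3$), multiply by the crossing sign, and sum to get $\widehat{L}_\varepsilon(g_i)$; summing over all $2N$ choices of deleted rung then yields $T_\varepsilon(f)$. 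I expect the embedding in the figure is arranged with enough symmetry that the contributions $\widehat{L}_\varepsilon(g_i)$ are all equal (or all of one sign and not all zero), so that $T_\varepsilon(f)$ equals $2N$ times a nonzero integer, or more generally a quantity that is manifestly nonzero for every $N\geq 2$ and $m\geq 0$. The conclusion that $f$ is chiral is then immediate from Theorem~\ref{mobius_even}.

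The case $N=2$ must be treated separately, since then each $g_i$ is an embedding of $M_3=K_{3,3}$ and $\widehat{L}_\varepsilon(g_i)$ denotes the \emph{original} Simon invariant, whose epsilon coefficients are the ones recalled in Section~\ref{GSI} rather than the generalized coefficients defined for $M_{2N-1}$. Thus the per-crossing bookkeeping has to be redone with the $K_{3,3}$ coefficients, using the labeling convention (rungs and outer edges of each $M_3$ subgraph matching those of $M_4$) fixed just before Theorem~\ref{mobius_even}; but the final step, that $T_\varepsilon(f)\neq 0$, should go through by the same argument.

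The main obstacle is precisely this bookkeeping: carefully tracking how the merging of outer edges upon deleting a rung changes outer-edge distances, and therefore epsilon coefficients, for every surviving crossing and for all $2N$ choices of deleted rung, in both the generic $N\geq 3$ case and the exceptional $N=2$ case, and then checking that the resulting sum of the $\widehat{L}_\varepsilon(g_i)$ does not cancel to zero. Everything else is routine once $T_\varepsilon(f)\neq 0$ has been verified.
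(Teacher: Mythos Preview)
Your plan is exactly the paper's approach: compute $T_\varepsilon(f)$ directly and invoke Theorem~\ref{mobius_even}. The bookkeeping you worry about turns out to be minimal, since in Figure~\ref{Figurem2n_chiral} all $2m+1$ crossings are between a single pair of outer edges (no rungs are involved), so deleting any rung removes no crossings; after deletion the two crossed edges sit at outer-edge distance $2N-2$ in every $M_{2N-1}$ subgraph, giving epsilon coefficient $-1$ (or $+1$ when $N=2$), whence $T_\varepsilon(f)=(\pm 1)(\pm 1)(2N)(2m+1)\neq 0$.
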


\begin{figure}[here]
\begin{center}
\includegraphics[width=.4\textwidth]{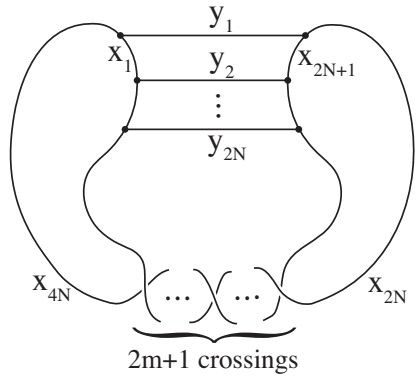}
\caption{An embedding of $M_{2N}$ with $2m+1$ crossings.}
\label{Figurem2n_chiral}
\end{center}
\end{figure}

\begin{proof}
For all of the $f(M_{2N-1})$ subgraphs, the outer edge distance between the two crossed edges is $2N-2$, so the crossing sign and epsilon coefficient for each of the $2m+1$ crossings is the same.  This epsilon coefficient is 1 for $M_3$ when $N=2$, and $-1$ for the generalized Simon invariant (if $N > 2$).  Since all of the $2N$ subgraphs have the same epsilon coefficient and sign for each crossing, both of which are $\pm 1$, it follows that $T_\varepsilon(f) = (\pm 1)(\pm 1)(2N)(2m+1)$ for the embedding $f$ in Figure~\ref{Figurem2n_chiral}.  Since $N \geq 2$ and $m \geq 0$, this means $T_\varepsilon(f) \neq 0$ and thus the embedding is chiral by Theorem~\ref{mobius_even}.
\end{proof}

\medskip

\subsection*{The Heawood graph}

 Let $C_{14}$ denote the Heawood graph oriented and labeled as in Figure~\ref{FigureHeawood}. In particular, we refer to its ``outer edges'' consecutively by $x_1, x_2, ..., x_{14}$, and its ``inner edges'' consecutively by $y_1, y_2, ..., y_{7}$.  We note that this classification of  oriented edges is only dependent on the labeling of the edges in the Hamiltonian cycle $\overline{x_1x_2...x_{14}}$. 

\begin{figure}[here]
\begin{center}
\includegraphics[width=0.4\textwidth]{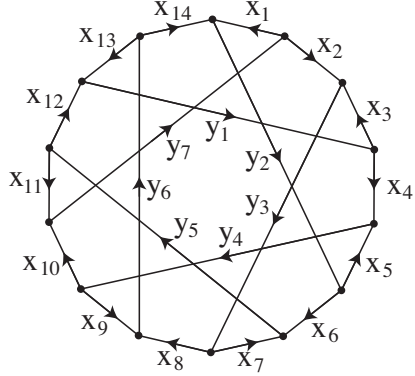}
\caption{An oriented Heawood graph.}
\label{FigureHeawood}
\end{center}
\end{figure}

For any pair of edges $a$ and $b$, let the minimal outer edge distance $d(a,b)$ be defined as the minimum number of edges in any path between $a$ and $b$ using only outer edges (not counting $a$ and $b$).  For any $i, j$, note that $d(x_i, x_j) \leq 6$, $d(x_i, y_j) \leq 4$, and $d(y_i, y_j) \leq 2$.

We define the epsilon coefficient $\varepsilon(a,b)$ of a pair of disjoint edges by:
\begin{equation*}
	\varepsilon(x_i,x_j) = 
	\begin{cases}
		2 & \text{if } d(x_i,x_j) = 1 \text{ or } 4\\
		-2 & \text{if } d(x_i,x_j) = 3 \text{ or } 5 \text{, and } x_i,x_j \text{ are connected by an edge}\\
		-3 & \text{if } d(x_i,x_j) = 5 \text{, and } x_i,x_j \text{ are not connected by an edge}\\
		5 & \text{if } d(x_i,x_j) = 6\\
		1 & \text{otherwise}
	\end{cases}
\end{equation*}
\medskip

\begin{equation*}
	\varepsilon(x_i,y_j) = 
	\begin{cases}
		2 & \text{if } d(x_i,y_j) = 1\\
		3 & \text{if } d(x_i,y_j) = 2 \text{ or } 4\\
		-1 & \text{if } d(x_i,y_j) = 3
	\end{cases}
\end{equation*}
\medskip

\begin{equation*}
	\varepsilon(y_i,y_j) = 
	\begin{cases}
		2 & \text{if } d(y_i,y_j) = 1\\
		5 & \text{if } d(y_i,y_j) = 2.
	\end{cases}
\end{equation*}

\medskip

For any embedding $f: C_{14} \rightarrow {S}^3$ with a regular projection, define
\[
\widehat{L}_{\varepsilon}(f) = \sum_{a\cap b=\emptyset} \varepsilon(a,b) \ell(f(a),f(b)).
\]
\medskip

\begin{Theorem}
\label{thm:Heawood} For any embedding $f$ of $C_{14}$ in ${S}^3$, $\widehat{L}_{\varepsilon}(f)$ is invariant under any ambient isotopy leaving the cycle $\overline{x_1x_2...x_{14}}$ setwise invariant.
\end{Theorem}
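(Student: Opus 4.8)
The plan is to mimic the structure of the proofs of Lemma~\ref{k7 invariant} and Theorem~\ref{mobius invariant}: show that $\widehat{L}_{\varepsilon}(f)$ is unchanged under each of the five Reidemeister moves, where for the fifth move we only need to consider isotopies that keep the cycle $\overline{x_1x_2\ldots x_{14}}$ setwise invariant. The first four Reidemeister moves are routine: Reidemeister I creates a self-crossing of a single edge, which contributes nothing to $\widehat{L}_{\varepsilon}(f)$ since the sum ranges over pairs of \emph{disjoint} edges; Reidemeister II creates two crossings of opposite sign between the same pair of edges, so the net contribution is $0$ regardless of the epsilon coefficient; Reidemeister III permutes crossings without changing any signs or which pairs of edges cross. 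So the entire content is in the fifth move.

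For the fifth move, I would argue exactly as in Theorem~\ref{mobius invariant}: an edge $e$ is pulled over (or under) a vertex $v$, generating three new crossings of $e$ with the three edges incident to $v$ that are not $e$ itself (the Heawood graph is trivalent, so every vertex has degree $3$). Of these three incident edges, those oriented away from $v$ acquire crossings with $e$ of one sign and those oriented toward $v$ acquire crossings of the opposite sign; the change $\Delta\widehat{L}_{\varepsilon}(f)$ is then $\pm$ the signed sum of the three epsilon coefficients $\varepsilon(e,\,\cdot\,)$, where the signs are determined by the orientations of the three incident edges relative to $v$. Because the cycle $\overline{x_1x_2\ldots x_{14}}$ is preserved setwise, at each vertex $v$ exactly two of the incident edges are outer edges $x_k$ (consecutive along the cycle, hence one points toward $v$ and one points away from $v$ in the standard orientation of Figure~\ref{FigureHeawood}) and exactly one is an inner edge $y_\ell$. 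One must check, for $e$ ranging over all outer and inner edges, and $v$ ranging over all vertices disjoint from $e$, that the signed sum of the three relevant $\varepsilon$-values vanishes. By the vertex-transitivity and edge-structure of the Heawood graph — more precisely, because the epsilon coefficients depend only on the combinatorial type of the pair (its minimal outer edge distance, whether the edges are inner or outer, and whether they are joined by an edge) — this reduces to a finite list of cases indexed by $d(e,v)$ together with the inner/outer type of $e$ and of the inner edge at $v$. I would present this as a short enumeration of cases in the same format as in Theorem~\ref{mobius invariant}, displaying in each case the two outer-edge coefficients and the one inner-edge coefficient and verifying that (outer~$+$~outer)~$-$~(inner)~$=0$.

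The main obstacle — and the reason this is the delicate step — is bookkeeping: one has to correctly identify, for each edge $e$ and each vertex $v$ of the Heawood graph, precisely which three edges are incident to $v$ and what their minimal outer edge distances to $e$ are, keeping careful track of the two subtleties that do not arise for Möbius ladders, namely (i) that $\varepsilon(x_i,x_j)$ for $d=3$ or $5$ depends on whether $x_i$ and $x_j$ are joined by an edge of $C_{14}$, and (ii) that $\varepsilon(x_i,y_j)$ takes the value $3$ at \emph{both} $d=2$ and $d=4$ while dropping to $-1$ at $d=3$. Consequently the case split on $d(e,v)$ is not monotone, and one must verify the cancellation in each regime separately. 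Once the case analysis is laid out using the Heawood graph's symmetry to collapse equivalent configurations, each individual identity is an immediate arithmetic check; assembling the complete (and exhaustive) list is the real work. Having verified invariance under all five moves subject to fixing $\overline{x_1x_2\ldots x_{14}}$, the theorem follows.
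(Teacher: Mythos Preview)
Your proposal is correct and follows essentially the same approach as the paper: reduce to the fifth Reidemeister move and verify, edge by edge, that pulling any edge over a vertex leaves $\widehat{L}_{\varepsilon}(f)$ unchanged via the method of Theorem~\ref{mobius invariant}. The paper's proof is in fact terser than yours—it simply notes that it suffices to check the 21 edges pulled over a single vertex and declares this ``easy to check''—so your outline, including the explicit identification of the two bookkeeping subtleties in the $\varepsilon$-coefficients, is a faithful (and somewhat more detailed) expansion of what the paper does.
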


\begin{proof}
As demonstrated in the previous proofs, we need only to verify that $L$ is invariant under the fifth Reidemeister move.  It suffices to show that $\widehat{L}_{\varepsilon}(f)$ is unchanged when any of the 21 edges in the Heawood graph is pulled over a particular vertex.  This is easy to check using the method shown in the proof of Theorem~\ref{mobius invariant}.\end{proof}

\medskip

It follows that  $\widehat{L}_{\varepsilon}(f)$ is a generalized Simon invariant of $C_{14}$.
\medskip

\begin{Lemma}
\label{thm:Heawood odd} For any embedding $f$ of $C_{14}$ in ${S}^3$, the generalized Simon invariant $\widehat{L}_{\varepsilon}(f)$ is an odd number.
\end{Lemma}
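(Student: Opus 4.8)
The plan is to mimic the proofs of Lemmas~\ref{k7 odd} and~\ref{mobius odd}. First I would observe that any crossing change in a regular projection of $f(C_{14})$ alters some $\ell(f(a),f(b))$ by $\pm 2$, hence changes $\widehat{L}_{\varepsilon}(f)$ by the even integer $\pm 2\varepsilon(a,b)$. Since any two embeddings of $C_{14}$ in $S^3$ are related by a finite sequence of crossing changes and ambient isotopies, the parity of $\widehat{L}_{\varepsilon}(f)$ is the same for every embedding $f$, so it suffices to exhibit a single embedding for which $\widehat{L}_{\varepsilon}(f)$ is odd.

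To compute parity I would use the reduction $\widehat{L}_{\varepsilon}(f)\equiv\sum_{c}\varepsilon(a_c,b_c)\pmod 2$, where the sum runs over all crossings $c$ of a fixed regular projection that occur between disjoint edges and $a_c,b_c$ denote the two edges meeting at $c$; this holds because each such crossing contributes $\pm\varepsilon(a_c,b_c)$ to $\widehat{L}_{\varepsilon}(f)$ and $\pm\varepsilon(a_c,b_c)\equiv\varepsilon(a_c,b_c)\pmod 2$, while crossings between edges sharing a vertex contribute nothing.

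Then I would take $f_0$ to be the embedding whose regular projection is the symmetric drawing of the Heawood graph in Figure~\ref{FigureHeawood} --- the Hamiltonian cycle $\overline{x_1x_2\cdots x_{14}}$ drawn as a round circle and the seven inner edges $y_1,\dots,y_7$ drawn as straight chords, each joining two cycle vertices five outer edges apart --- with every apparent intersection turned into a crossing. Because the outer edges lie on the bounding circle, every crossing of this projection is between two inner edges, which share no vertices, so all crossings are between disjoint edges. Two chords cross exactly when their endpoints interleave around the circle, and a direct check (using the order-$7$ rotational symmetry of the drawing, which permutes the seven chords cyclically) shows that each inner edge crosses exactly four others, giving $14$ crossings in all; moreover, of the four crossings on a given chord, exactly two are with an inner edge at minimal outer edge distance $1$ (epsilon coefficient $2$) and exactly two at distance $2$ (epsilon coefficient $5$). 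Hence there are $7$ crossings of epsilon coefficient $5$ and $7$ of epsilon coefficient $2$, so by the second paragraph $\widehat{L}_{\varepsilon}(f_0)\equiv 7\cdot 5+7\cdot 2\equiv 1\pmod 2$. Combined with the first paragraph, $\widehat{L}_{\varepsilon}(f)$ is odd for every embedding of $C_{14}$.

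The only substantive step is the crossing bookkeeping in the third paragraph, and I expect that to be the main obstacle: one must confirm that in the chosen drawing every crossing is inner--inner, that there are $14$ of them, and that they split evenly between outer edge distances $1$ and $2$. This is entirely routine given the explicit chord pattern of the inner edges, and the $\mathbb{Z}/7$ symmetry reduces the whole verification to inspecting the four crossings incident to a single chord.
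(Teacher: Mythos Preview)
Your proposal is correct and follows essentially the same approach as the paper: both reduce to a parity computation on a single embedding (the symmetric projection of Figure~\ref{FigureHeawood}) using the fact that crossing changes alter $\widehat{L}_{\varepsilon}$ by even numbers. The paper simply asserts that ``there are an odd number of crossings with odd epsilon coefficient'' and leaves the verification to the reader, whereas you carry out that verification explicitly --- identifying the $14$ inner--inner crossings, splitting them as $7$ at outer edge distance $1$ (epsilon $2$) and $7$ at distance $2$ (epsilon $5$), and exploiting the $\mathbb{Z}/7$ symmetry to reduce to a single chord.
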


\begin{proof}
Since any crossing change will change the signed crossing number between the two edges by $\pm 2$, we only need to find an embedding $f$ where $\widehat{L}_{\varepsilon}(f)$ is odd.  Consider an embedding of the Heawood graph which has Figure~\ref{FigureHeawood} as its projection with the intersections between edges replaced by crossings.  The reader can check that regardless of the signs of the crossings, there are an odd number of crossings with odd epsilon coefficient.  Hence $\widehat{L}_{\varepsilon}(f)$ is an odd number.
\end{proof}

The proof of the following lemma is left as an exercise.

\begin{Lemma}\label{Heawood automorphism}
Let $\alpha$ be an automorphism of $C_{14}$ that takes the Hamiltonian cycle $\overline{x_1x_2...x_{14}}$ to itself.  Then corresponding epsilon coefficients of $C_{14}$ and $\alpha(C_{14})$ are equal, and $\alpha$ either preserves the orientation of every edge or reverses the orientation of every edge.
\end{Lemma}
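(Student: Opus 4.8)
The plan is to follow the same three-step pattern as Lemma~\ref{mobius automorphism}: first determine how $\alpha$ can act on the outer cycle, then deduce that the epsilon coefficients are preserved, and finally read the orientation statement off Figure~\ref{FigureHeawood}.

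First I would observe that, since $\alpha$ carries the Hamiltonian cycle $\overline{x_1x_2\cdots x_{14}}$ onto itself setwise and this cycle passes through every vertex of $C_{14}$, the permutation $\alpha$ induces on the outer cycle is a symmetry of the $14$-gon: writing $\alpha(x_i)=x_{\pi(i)}$, either $\pi(i)=i+k$ for all $i$ (a rotation) or $\pi(i)=k-i$ for all $i$ (a reflection), with indices read modulo $14$, for some fixed $k$. (We do not need to know which of these maps actually extend to automorphisms of $C_{14}$, only that every $\alpha$ as in the hypothesis has one of these two forms.) In particular $\alpha$ maps the set of outer edges bijectively onto itself, hence maps the seven inner edges bijectively onto themselves, and it preserves cyclic distance along $\overline{x_1\cdots x_{14}}$.

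Next, for the epsilon coefficients: inspecting the three cases of the definition shows that $\varepsilon(a,b)$ is determined entirely by (i) whether each of $a$ and $b$ is an outer or an inner edge, (ii) the minimal outer edge distance $d(a,b)$, and (iii) in the two borderline $\varepsilon(x_i,x_j)$ cases $d(x_i,x_j)=3$ and $d(x_i,x_j)=5$, whether the endpoints of $x_i$ and $x_j$ are joined by an edge of $C_{14}$. By the previous paragraph $\alpha$ preserves (i); since $\alpha$ acts on the outer cycle by a rotation or reflection it preserves cyclic distance, hence $d(\alpha(a),\alpha(b))=d(a,b)$, giving (ii); and being a graph automorphism $\alpha$ preserves the adjacency relation used in (iii). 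Therefore $\varepsilon(\alpha(a),\alpha(b))=\varepsilon(a,b)$ for every pair of disjoint edges $a,b$, which is the first assertion of the lemma.

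Finally, for the orientations I would inspect Figure~\ref{FigureHeawood}, where the outer edges are oriented coherently around the cycle and the inner edges are oriented in the pattern drawn. If $\alpha$ acts on the outer cycle as a rotation, it preserves the coherent orientation of the outer edges, and from the figure one checks that it then carries each $y_j$ to an inner edge with matching orientation; if $\alpha$ acts as a reflection, it reverses every outer edge, and one checks from the figure (each such reflection permutes the chords while interchanging the two endpoints of each chord) that it reverses every inner edge as well. Hence $\alpha$ either preserves the orientation of every edge or reverses the orientation of every edge. I expect this last verification to be the only substantive work: it is a finite inspection tied to the specific figure, and the only possibility that must be excluded is a symmetry that reverses some edge orientations while fixing others --- which cannot happen, since a dihedral symmetry of $\overline{x_1\cdots x_{14}}$ already acts uniformly on the outer edges and the inner edges are thereby forced to follow. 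Everything else is automatic and parallels the proof of Lemma~\ref{mobius automorphism}.
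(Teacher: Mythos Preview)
The paper does not actually prove this lemma: it states explicitly that ``the proof of the following lemma is left as an exercise.'' Your proposal supplies precisely the argument the authors have in mind---it parallels Lemma~\ref{mobius automorphism} step for step, reducing the epsilon-coefficient claim to the fact that any $\alpha$ fixing the outer cycle acts as a dihedral symmetry of the $14$-gon (hence preserves the outer/inner distinction, the outer-edge distance $d$, and adjacency), and reducing the orientation claim to a finite inspection of Figure~\ref{FigureHeawood}. This is correct and is exactly the intended exercise.

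One small point worth tightening: in the reflection case you write that ``each such reflection permutes the chords while interchanging the two endpoints of each chord,'' but a reflection need not send a chord to itself---it may send $y_i$ to some other $y_j$. What you actually need (and what the figure check gives) is that the image chord carries the reversed orientation, not that each chord is individually flipped in place. This does not affect the validity of your argument, only the phrasing of the parenthetical.
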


\begin{Theorem}\label{Heawood chiral}
The Heawood graph is intrinsically chiral.
\end{Theorem}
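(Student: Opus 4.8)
The plan is to run the argument of Theorem~\ref{k7 chiral} almost verbatim; note that Proposition~\ref{intrinsic chirality} does \emph{not} apply directly, since not every automorphism of $C_{14}$ preserves the epsilon coefficients --- only those fixing the cycle $\overline{x_1x_2\cdots x_{14}}$ do, by Lemma~\ref{Heawood automorphism}. Suppose, for contradiction, that some embedding $f$ of $C_{14}$ admits an orientation reversing homeomorphism $h$ of the pair $(S^3,f(C_{14}))$, and let $\alpha$ be the automorphism of $C_{14}$ that $h$ induces. The crux is to produce an \emph{odd} integer $n$ and a Hamiltonian cycle $\gamma$ of $C_{14}$ with $\alpha^{n}(\gamma)=\gamma$; equivalently, an orientation reversing power $h^{n}$ of $h$ that leaves an embedded Hamiltonian cycle $f(\gamma)$ setwise invariant.

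To obtain $\gamma$ and $n$ I would invoke a Conway--Gordon type parity theorem for $C_{14}$, used the way the Conway--Gordon theorem \cite{Conway 1983} is used for $K_7$: for every embedding $f$ of $C_{14}$ the set $J$ of Hamiltonian cycles of $f(C_{14})$ whose knot type has nontrivial Arf invariant has odd cardinality (equivalently, an appropriate mod $2$ Conway--Gordon sum over Hamiltonian cycles is embedding-independent and equal to $1$; in particular $C_{14}$ is intrinsically knotted, which fits with $C_{14}$ lying in the $K_7$ family under $\triangle$--$Y$ exchanges). Since any homeomorphism of $S^3$ preserves the Arf invariant of a knot, $h$ permutes $J$, and because $|J|$ is odd there is an orbit $O$ in $J$ with $|O|=n$ for some odd $n$. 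Then $h^{n}$ fixes every cycle in $O$ setwise; choosing such a cycle $C$, we set $\gamma=f^{-1}(C)$. Since $n$ is odd, $h^{n}$ is orientation reversing.

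Now relabel and reorient $C_{14}$ as in Figure~\ref{FigureHeawood} so that $\gamma=\overline{x_1x_2\cdots x_{14}}$; this is legitimate because the complement of any Hamiltonian cycle of $C_{14}$ is a perfect matching of seven ``rungs'' and all Hamiltonian cycles of $C_{14}$ are equivalent under $\mathrm{Aut}(C_{14})$, so any Hamiltonian cycle may serve as the outer cycle. With this labelling the automorphism $\alpha^{n}$ induced by $h^{n}$ fixes $\overline{x_1x_2\cdots x_{14}}$ setwise, so by Lemma~\ref{Heawood automorphism} it preserves the epsilon coefficients of $C_{14}$ and either preserves or reverses the orientation of every edge. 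I would then argue exactly as in Theorem~\ref{k7 chiral}: for a fixed sphere of projection $P$, the embeddings $f\circ\alpha^{n}$ and $f$ have the same image, hence the same projection, and the two conclusions of Lemma~\ref{Heawood automorphism} show that each crossing has the same sign and the same epsilon coefficient for $f\circ\alpha^{n}$ as for $f$; thus $\widehat{L}_{\varepsilon}(h^{n}\circ f)=\widehat{L}_{\varepsilon}(f\circ\alpha^{n})=\widehat{L}_{\varepsilon}(f)$. On the other hand, $h^{n}$ is orientation reversing and so isotopic to a reflection $\rho$ fixing $P$ pointwise; reflection reverses every crossing sign while leaving the labelling and epsilon coefficients alone, so $\widehat{L}_{\varepsilon}(\rho\circ f)=-\widehat{L}_{\varepsilon}(f)$, and by Theorem~\ref{thm:Heawood} (applied to the isotopy between $h^{n}$ and $\rho$) we get $\widehat{L}_{\varepsilon}(h^{n}\circ f)=\widehat{L}_{\varepsilon}(\rho\circ f)=-\widehat{L}_{\varepsilon}(f)$. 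Therefore $\widehat{L}_{\varepsilon}(f)=0$, contradicting Lemma~\ref{thm:Heawood odd}; hence no such $h$ exists and $C_{14}$ is intrinsically chiral.

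The step I expect to be the genuine obstacle is the parity input of the second paragraph. For the M\"{o}bius ladders of Corollary~\ref{mobius chiral} this difficulty does not arise, since $\mathrm{Aut}(M_{2N+1})$ fixes the outer cycle; but $\mathrm{Aut}(C_{14})\cong PGL(2,7)$ genuinely moves $\overline{x_1x_2\cdots x_{14}}$ and contains automorphisms (for example of order $4$ or $8$) fixing no Hamiltonian cycle, so Lemma~\ref{Heawood automorphism} cannot be applied to $\alpha$ itself, and a Conway--Gordon type substitute must be established first --- in particular one must check that the relevant parity count can be taken over the Hamiltonian cycles of $C_{14}$, so that the cycle $C$ fixed by $h^{n}$ can legitimately be used as the outer cycle in Figure~\ref{FigureHeawood}. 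If such a statement in exactly this form is not already available, an alternative is a direct analysis of which elements of $PGL(2,7)$ can be realized by an orientation reversing homeomorphism, using that such an element can fix no Hamiltonian cycle (else $\widehat{L}_{\varepsilon}(f)$ would already vanish) and that all its orbits on Hamiltonian cycles must have even order, together with the fact that the stabilizer of a Hamiltonian cycle in $\mathrm{Aut}(C_{14})$ is dihedral of order $14$; I would expect that route to require more care, especially for automorphisms of order $2$ and $6$.
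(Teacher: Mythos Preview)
Your argument for the case where an odd power of $h$ fixes a Hamiltonian cycle is exactly right and matches the paper. The gap is precisely the parity input you flag: the Conway--Gordon type result that is actually available for $C_{14}$ (Nikkuni \cite{Nikkuni 2012}) says that the mod $2$ sum of the Arf invariants over all $14$-cycles \emph{and} all $12$-cycles equals $1$, not that the sum over Hamiltonian cycles alone is $1$. So you cannot conclude that $|J|$ is odd for $J$ the set of Hamiltonian cycles with Arf $1$; you only know that either the $14$-cycles with Arf $1$ or the $12$-cycles with Arf $1$ form an odd set.

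The paper handles this by first replacing $h$ with an odd power so that the induced automorphism has order $2^{a}$ (as in Corollary~\ref{K4n+3}); then whichever of the two sets is odd must contain a fixed cycle. If a $14$-cycle is fixed, the argument proceeds exactly as you wrote. If instead a $12$-cycle $Z$ is fixed, your generalized Simon invariant argument does not apply, and the paper invokes a completely separate fact: the three edges of $C_{14}$ not on $Z$ that have both endpoints on $Z$ together with $Z$ form a M\"{o}bius ladder $M_{3}$ with outer loop $Z$, and by \cite{Flapan 1989} no embedding of $M_{3}$ admits an orientation reversing homeomorphism preserving the outer loop. This second case is what your proposal is missing; the $PGL(2,7)$ analysis you sketch as a fallback is not needed once the $12$-cycle case is dispatched this way.
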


 \begin{proof}  Let $C_{14}$ denote the Heawood graph.  Suppose that for some embedding $f$ of $C_{14}$ in $S^3$, there is an orientation reversing homeomorphism $h$ of $(S^3, f(C_{14}))$.  It was shown by Nikkuni \cite{Nikkuni 2012} that the mod 2 sum of the Arf invariants of all the 14-cycles and 12-cycles in an embedding of $C_{14}$ is 1.  Thus $f(C_{14})$ either has an odd number of 14-cycles with Arf invariant 1 or an odd number of 12-cycles with Arf invariant 1.   By arguing as in the proof of Corollary \ref{K4n+3}, without loss of generality we can assume that the order of the automorphism that $h$ induces on $C_{14}$ is a power of 2.  It follows that $h$ either leaves some 14-cycle or some 12-cycle setwise invariant.
 
 Suppose that $h$ leaves a 14-cycle setwise invariant.  Label the edges of this 14-cycle consecutively as $\overline{x_1x_2...x_{14}}$.  Then it follows from Lemma \ref{Heawood automorphism}, that $\widehat{L}_{\varepsilon}(h\circ f)=\widehat{L}_{\varepsilon}(f)$.  But since $h$ is orientation reversing we can argue as in the proof of Proposition~\ref{intrinsic chirality} that $\widehat{L}_{\varepsilon}(h\circ f)=-\widehat{L}_{\varepsilon}(f)$, which is impossible since $L(f)$ is odd and hence non-zero.
 
 Now suppose that $h$ leaves a 12-cycle $Z$ setwise invariant.  As shown in Figure~\ref{Figure12cycle}, $G$ has precisely three edges not in $Z$ which have both vertices in $Z$.  Now $Z$ together with these three edges is a M\"{o}bius ladder $M_3$.  However, it was shown in \cite{Flapan 1989} that no embedding of $M_3$ in $S^3$ has an orientation reversing homeomorphism which takes the outer loop $Z$ to itself.  Thus again we have a contradiction. \end{proof}

\begin{figure}[here]
\begin{center}
\includegraphics[width=0.4\textwidth]{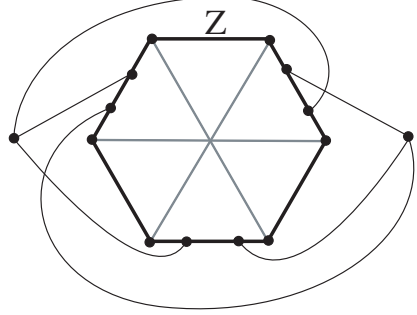}
\caption{A 12-cycle in the Heawood graph.}
\label{Figure12cycle}
\end{center}
\end{figure}

\section{The subgraphs $2K_3$, $K_5$, and $K_{3,3}$ of a given graph}

Shinjo and Taniyama \cite{Shinjo} proved that two embeddings $f$ and $g$ of a graph $G$ in $S^3$ are spatial-graph homologous
if and only if for each $2K_3$ subgraph $H$ of $G$ the restriction maps $f |_H$ and $g|_H$ have the same linking number, and for each $K_5$ or $K_{3,3}$ subgraph $H$ of $G$ the restriction
maps $f |_H$ and $g|_H$ have the same Simon invariant.  

We now show that for any oriented graph $G$, any integer linear combination of
the reduced Wu invariants of subgraphs of $G$ is itself a reduced Wu invariant for $G$.
\medskip

\begin{Theorem}\label{linear} 
Let $G$ be a graph with oriented edges, and let $G_{1},G_{2},\ldots,G_{k}$ denote subgraphs of $G$ with orientations inherited from $G$. For each $q\le k$, let $\varepsilon_{q}:L(G_{q})\to {\mathbb Z}$ be a homomorphism, and $i_{q}:G_{q}\to G$ be the inclusion map. Let $m_{1},m_{2},\ldots,m_{k}$ be integers and let $\varepsilon:L(G) \to {\mathbb Z}$ be the homomorphism given by $\varepsilon=\sum_{q=1}^{k}m_{q}\varepsilon_{q}\circ (i_{q}\times i_{q})^{*}$. Then for any embedding $f$ of $G$ in $S^{3}$, $\sum_{q=1}^{k}m_{q}\tilde{\mathcal L}_{\varepsilon_{q}}(f|_{G_{q}})$ is the reduced Wu invariant given by $\tilde{\mathcal L}_{\varepsilon}(f)$. \end{Theorem}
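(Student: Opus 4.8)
The plan is to deduce the identity from the naturality of the Wu invariant under inclusions of subgraphs, after which the statement becomes a one-line computation. Recall that the Wu invariant of an embedding $f\colon G\to S^{3}$ is $\mathcal L(f)=(f\times f)^{*}(\Sigma)\in H^{2}(C_{2}(G),\sigma)\cong L(G)$. For each $q$, the inclusion $i_{q}\colon G_{q}\to G$ induces a $\sigma$-equivariant map $i_{q}\times i_{q}\colon C_{2}(G_{q})\to C_{2}(G)$, hence a homomorphism $(i_{q}\times i_{q})^{*}\colon H^{2}(C_{2}(G),\sigma)\to H^{2}(C_{2}(G_{q}),\sigma)$, which under the identifications with linking modules is precisely the map $L(G)\to L(G_{q})$ appearing in the statement.

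First I would establish the naturality identity
\[
(i_{q}\times i_{q})^{*}\bigl(\mathcal L(f)\bigr)=\mathcal L(f|_{G_{q}}).
\]
This follows formally from $f|_{G_{q}}=f\circ i_{q}$, which gives $(f|_{G_{q}})\times(f|_{G_{q}})=(f\times f)\circ(i_{q}\times i_{q})$, together with contravariant functoriality of cohomology applied to $\Sigma$. Alternatively, and more in keeping with the combinatorial tone of the paper, I would verify it directly from Taniyama's description: the homomorphism $Z(G)\to Z(G_{q})$ sending $E^{e_{i},e_{j}}$ to itself when both $e_{i}$ and $e_{j}$ are edges of $G_{q}$ and to $0$ otherwise carries $B(G)$ into $B(G_{q})$ (each generator $\delta(V^{e_{i},v_{s}})$ of $B(G)$ maps either to the corresponding generator of $B(G_{q})$ or to $0$), so it descends to a map $L(G)\to L(G_{q})$; and since a fixed projection of $f(G)$ restricts to a projection of $f(G_{q})$ with the crossing sums $\ell(f(e_{i}),f(e_{j}))$ unchanged for edges of $G_{q}$, applying this map to $\mathcal L(f)=\bigl[\sum \ell(f(e_{i}),f(e_{j}))E^{e_{i},e_{j}}\bigr]$ deletes exactly the terms involving an edge outside $G_{q}$ and leaves $\mathcal L(f|_{G_{q}})$.

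Granting this, the theorem is immediate: using $\varepsilon=\sum_{q=1}^{k}m_{q}\,\varepsilon_{q}\circ(i_{q}\times i_{q})^{*}$ and the definition $\tilde{\mathcal L}_{\varepsilon}(f)=\varepsilon(\mathcal L(f))$, we obtain
\[
\tilde{\mathcal L}_{\varepsilon}(f)=\varepsilon\bigl(\mathcal L(f)\bigr)=\sum_{q=1}^{k}m_{q}\,\varepsilon_{q}\bigl((i_{q}\times i_{q})^{*}(\mathcal L(f))\bigr)=\sum_{q=1}^{k}m_{q}\,\varepsilon_{q}\bigl(\mathcal L(f|_{G_{q}})\bigr)=\sum_{q=1}^{k}m_{q}\,\tilde{\mathcal L}_{\varepsilon_{q}}(f|_{G_{q}}).
\]

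The argument is essentially formal, so there is no serious obstacle; the only point requiring care is the identification of $(i_{q}\times i_{q})^{*}$ with the naive ``keep only pairs of edges lying in $G_{q}$'' map on linking modules --- equivalently, checking that this naive map on $Z(G)$ respects the submodules $B(G)$ and $B(G_{q})$ and agrees with the cohomological pullback. Taking the functorial route makes even this automatic, so I would present the functorial derivation as the main line and relegate the combinatorial verification to a remark.
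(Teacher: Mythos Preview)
Your proposal is correct and follows essentially the same approach as the paper: both deduce the result from the functoriality identity $(f|_{G_{q}})\times(f|_{G_{q}})=(f\times f)\circ(i_{q}\times i_{q})$, pull back $\Sigma$, and unwind the definitions. The only difference is cosmetic---you isolate the naturality identity $(i_{q}\times i_{q})^{*}(\mathcal L(f))=\mathcal L(f|_{G_{q}})$ as a lemma and offer an alternative combinatorial verification, whereas the paper folds this step directly into a single displayed chain of equalities.
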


\begin{proof}
Observe that the embedding $(f\times f)\circ (i_{q}\times i_{q})$ is equivalent to the embedding $(f|_{G_{q}})\times (f|_{G_{q}}):C_{2}(G_{q})\to C_{2}({\mathbb R}^{3})$.  Hence, by the definition of the Wu invariant, it follows that 
\begin{eqnarray*}
\tilde{\mathcal L}_{\varepsilon}(f)
&=& \varepsilon({\mathcal L}(f))\\
&=& \varepsilon((f\times f)^{*}(\Sigma)) \\
&=& \sum_{q=1}^{k}m_{q}\varepsilon_{q}\circ (i_{q}\times i_{q})^{*}\circ (f\times f)^{*}(\Sigma) \\
&=& \sum_{q=1}^{k}m_{q}\varepsilon_{q}\circ ((f\times f)\circ (i_{q}\times i_{q}))^{*}(\Sigma)\\
&=& \sum_{q=1}^{k}m_{q}\varepsilon_{q}\circ ((f|_{G_{q}})\times (f|_{G_{q}}))^{*}(\Sigma)\\
&=& \sum_{q=1}^{k}m_{q}\varepsilon_{q}({\mathcal L}(f|_{G_{q}})) \\
&=& \sum_{q=1}^{k}m_{q}\tilde{\mathcal L}_{\varepsilon_{q}}(f|_{G_{q}}). 
\end{eqnarray*}
Thus we have the result. 
\end{proof}

\medskip

This theorem allows us to define new reduced Wu invariants, as we see from the following two examples.

\medskip

\begin{Example}\label{ML}
{\rm For $N\geq 2$, consider the oriented labeled graph of a M\"{o}bius ladder $M_{2N+1}$ illustrated in Figure~\ref{Mobius}.   For $q=0,1,\ldots,2N$, let $G_{q}$ be the subgraph of $M_{2N+1}$ consisting of the outer cycle $\overline{x_1 x_2 ... x_{4N+2}}$ together with the three rungs $y_{q+1}$, $y_{q+2}$, and $y_{q+3}$ where the subscripts are considered mod $2N+1$ and the orientations are inherited from $M_{2N+1}$.  Then each $G_{q}$ is homeomorphic to $K_{3,3}$. Thus each $L(G_{q})$ is generated by $[E^{x_{q+1},x_{q+2N+2}}]$. Let $\varepsilon_{q}$ be the homomorphism from $L(G_{q})$ to ${\mathbb Z}$ defined by $\varepsilon_{q}(x_{q+1},x_{q+2N+2})=1$.  Let $f$ be an embedding of $M_{2N+1}$ in $S^3$.  Then by Theorem~\ref{linear}, $\tilde{\mathcal L}_\varepsilon(f)=\sum_{q=0}^{2N}\tilde{\mathcal L}_{\varepsilon_{q}}(f|_{G_{q}})$ defines a reduced Wu invariant for $M_{2N+1}$.

Observe that this reduced Wu invariant is not equal to the generalized Simon invariant for $M_{2N+1}$ that we defined in Section~\ref{GSI}.  However, this invariant has similar properties to those we proved for the generalized Simon invariant of $M_{2N+1}$.  In particular, since each $\tilde{\mathcal L}_{\varepsilon_{q}}(f|_{G_{q}})$ is essentially the Simon invariant of $f|_{G_{q}}$ and therefore odd valued, it follows that $\tilde{\mathcal L}(f)$ is always odd. Moreover, we know from \cite{S86} that any automorphism of $M_{2N+1}$ that takes the outer cycle $\overline{x_1 x_2 ... x_{4N+2}}$ to itself. Thus any automorphism of $M_{2N+1}$ leaves $\left\{G_{0},G_{1},\ldots,G_{2N}\right\}$ setwise invariant. This implies that $\tilde{\mathcal L}(f)$ is independent of labeling. }
\end{Example}

\begin{figure}[htbp]
      \begin{center}
{\includegraphics[width=0.95\textwidth]{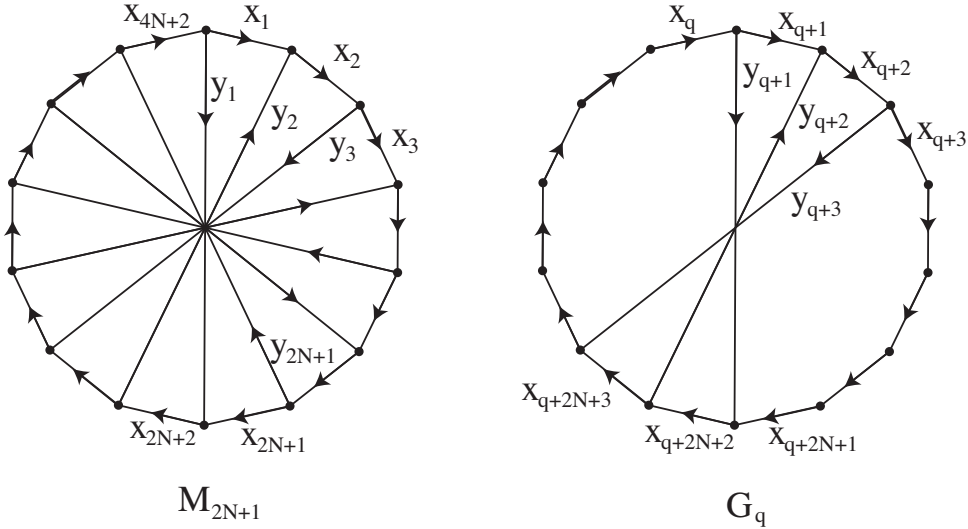}}
      \end{center}
   \caption{An oriented $M_{2N+1}$ together with a $K_{3,3}$ subgraph.  Note the subscripts on $x_i$ are considered mod $4N+2$ and those on $y_i$ are considered mod $2N+1$.}
  \label{Mobius}
\end{figure} 

\begin{Example}\label{Hea}
{\rm 
Let $C_{14}$ be the Heawood graph as illustrated in Figure \ref{Heawood}. For $q=0,1,\ldots,6$, let $G_{q}$ be the subgraph of $C_{14}$ as illustrated in Figure \ref{Heawood}, where the labels of vertices are considered mod 14. Note that each $G_{q}$ is homeomorphic to $K_{3,3}$. Thus each $L(G_{q})$ is generated by $[E^{x_{1},x_{8}}]$. Let $\varepsilon_{q}$ be the homomorphism from $L(G_{q})$ to ${\mathbb Z}$ defined by $\varepsilon_{q}(x_{1},x_{8})=1$.  Let $f$ be an embedding of $C_{14}$ in $S^3$.  Then by Theorem~\ref{linear}, $\tilde{\mathcal L}_\varepsilon(f)=\sum_{q=0}^{6}\tilde{\mathcal L}_{\varepsilon_{q}}(f|_{G_{q}})$ defines a reduced Wu invariant for $C_{14}$.

Again this reduced Wu invariant is not equal to the generalized Simon invariant for $C_{14}$ that we defined in Section~\ref{GSI}, but has similar properties to those of the generalized Simon invariant.  In particular, since each $\tilde{\mathcal L}_{\varepsilon_{q}}(f|_{G_{q}})$ is essentially the Simon invariant of $f|_{G_{q}}$ and therefore odd valued, it follows that $\tilde{\mathcal L}(f)$ is always odd.  Moreover, let $\alpha$ be an automorphism of $C_{14}$ takes the outer cycle $\overline{x_1\dots x_{14}}$ to itself, and thus the edges $y_{1},y_{2},\ldots,y_{7}$ as well. Then $\alpha$ permutes $\left\{G_{0},G_{1},\ldots,G_{6}\right\}$ and reversing every arrow would have no effect on the signs of the crossings. This implies that $\tilde{\mathcal L}(f)$ is preserved under $\alpha$. 
}
\end{Example}

\begin{figure}[htbp]
      \begin{center}
{\includegraphics[width=0.9\textwidth]{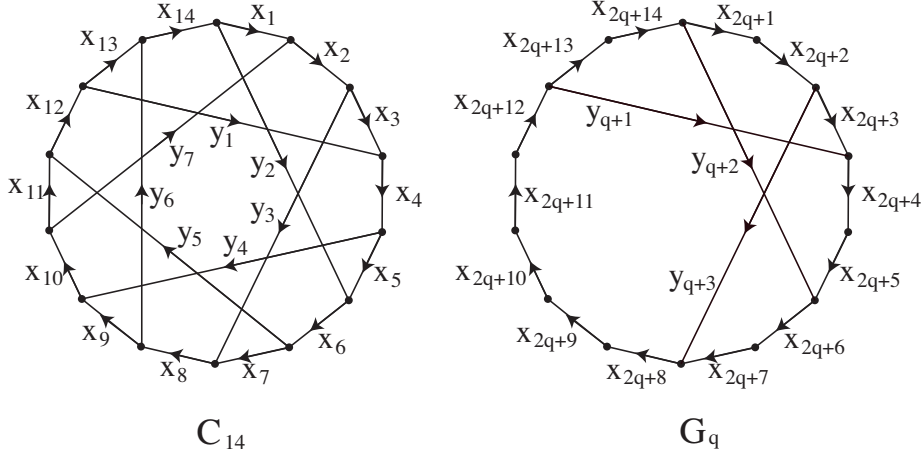}}
      \end{center}
   \caption{An oriented Heawood graph together with a $K_{3,3}$ subgraph.}
  \label{Heawood}
\end{figure}

Now we prove the converse of Theorem~\ref{linear}.  In particular, we show that any reduced Wu invariant of a graph $G$ can be expressed as a linear combination of reduced Wu invariants of subgraphs $2K_3$, $K_5$, and $K_{3,3}$ of $G$.
\medskip

\begin{Theorem}\label{reduced}  Let $G$ be a graph with oriented edges, and let $G_{1},G_{2},\ldots,G_{k}$ denote all of the $2K_{3}$, $K_{5}$, and $K_{3,3}$ subgraphs of $G$ with orientations inherited from $G$.  For each $q\leq k$, let $\varepsilon_{q}:L(G_q)\to {\mathbb Z}$ be an isomorphism, and let $i_{q}:G_q\to G$ be the inclusion map. Then for any homomorphism $\varepsilon: L(G)\to {\mathbb Z}$, there exists integers $m\neq 0$ and $m_{1},m_{2},\ldots,m_{k}$ such that for any embedding $f$ of $G$ in $S^3$.
\begin{eqnarray*}
m\tilde{\mathcal L}_{\varepsilon}(f)
=\sum_{q=1}^{k}m_{q}\tilde{\mathcal L}_{\varepsilon_{q}}(f|_{G_{q}}). 
\end{eqnarray*}

\end{Theorem}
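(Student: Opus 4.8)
The plan is to work entirely on the level of the linking modules and their duals, exploiting the fact that these are finitely generated free $\mathbb Z$-modules together with Theorem~\ref{linear}, which already gives us the "easy" direction: any integer linear combination $\sum m_q \varepsilon_q\circ(i_q\times i_q)^*$ of pulled-back homomorphisms is realized on every embedding $f$ by $\sum m_q\tilde{\mathcal L}_{\varepsilon_q}(f|_{G_q})$. So the statement will follow once I show that for any homomorphism $\varepsilon:L(G)\to\mathbb Z$ there is a nonzero integer $m$ and integers $m_1,\dots,m_k$ with $m\varepsilon=\sum_{q=1}^k m_q\,\varepsilon_q\circ(i_q\times i_q)^*$ as homomorphisms $L(G)\to\mathbb Z$. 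Once this algebraic identity holds, applying both sides to ${\mathcal L}(f)=(f\times f)^*(\Sigma)$ and invoking Theorem~\ref{linear} (or rather re-running its one-line computation) yields $m\tilde{\mathcal L}_\varepsilon(f)=\sum m_q\tilde{\mathcal L}_{\varepsilon_q}(f|_{G_q})$ for every embedding $f$.

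The heart of the matter is therefore the claim that the images of the maps $(i_q\times i_q)^*\colon L(G)^*\to L(G_q)^*$, composed with the isomorphisms $\varepsilon_q$ (which just identify $L(G_q)^*$ with $\mathbb Z$ after tensoring appropriately — each $L(G_q)$ is free of rank one since $G_q$ is $2K_3$, $K_5$, or $K_{3,3}$), span a finite-index subgroup of $L(G)^*=\operatorname{Hom}(L(G),\mathbb Z)$. Equivalently, after tensoring with $\mathbb Q$, the homomorphisms $\{\varepsilon_q\circ(i_q\times i_q)^*\}_{q\le k}$ span the full dual space $\operatorname{Hom}(L(G),\mathbb Q)$. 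Dualizing once more, this is the assertion that the natural map $L(G)\to\bigoplus_{q=1}^k L(G_q)$ induced by the inclusions $(i_q\times i_q)$ is injective after $\otimes\mathbb Q$ — i.e., that an element of $L(G)$ which restricts to zero in every $2K_3$, $K_5$, and $K_{3,3}$ subgraph is torsion, hence zero since $L(G)$ is free. This is exactly where I would appeal to the work of Shinjo and Taniyama~\cite{Shinjo} quoted just before the theorem: two embeddings are spatial-graph homologous iff they agree on linking numbers of all $2K_3$ subgraphs and on Simon invariants of all $K_5$ and $K_{3,3}$ subgraphs; since spatial-graph homology is detected precisely by the Wu invariant ${\mathcal L}(f)\in L(G)$, this says the map $L(G)\to\bigoplus_q L(G_q)$ separates Wu invariants, and since every class in $L(G)$ is the Wu invariant of some embedding, the map is injective. (Alternatively one can cite Taniyama's combinatorial description directly: the generators $[E^{e_i,e_j}]$ of $L(G)$ each live in, and are nonzero in, the linking module of a suitable $2K_3$ or $K_5$ or $K_{3,3}$ subgraph containing $e_i\cup e_j$, and this pins down the restriction map's injectivity generator by generator.)

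Given injectivity of $\iota\colon L(G)\hookrightarrow\bigoplus_{q=1}^k L(G_q)$ over $\mathbb Q$, the dual $\iota^*\colon\bigoplus_q L(G_q)^*\twoheadrightarrow L(G)^*$ is surjective over $\mathbb Q$, so its image over $\mathbb Z$ has finite index, say index $m>0$. Then $m\varepsilon$ lies in that image, i.e. $m\varepsilon=\sum_q m_q\,\varepsilon_q\circ(i_q\times i_q)^*$ for suitable integers $m_q$ (here I use that each $\varepsilon_q$ is an isomorphism to identify $L(G_q)^*\cong\mathbb Z\cdot\varepsilon_q$, so a $\mathbb Z$-linear combination of the $\varepsilon_q\circ(i_q\times i_q)^*$ is the general element of the image). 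Evaluating at ${\mathcal L}(f)$ and repeating the displayed computation in the proof of Theorem~\ref{linear} verbatim — using $(f\times f)\circ(i_q\times i_q)\simeq (f|_{G_q})\times(f|_{G_q})$ — gives
\begin{eqnarray*}
m\,\tilde{\mathcal L}_\varepsilon(f)
= m\,\varepsilon((f\times f)^*(\Sigma))
= \sum_{q=1}^k m_q\,\varepsilon_q\big(((f|_{G_q})\times(f|_{G_q}))^*(\Sigma)\big)
= \sum_{q=1}^k m_q\,\tilde{\mathcal L}_{\varepsilon_q}(f|_{G_q}),
\end{eqnarray*}
which is the desired identity. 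The one subtlety to watch is that $m$ depends only on $G$ and the chosen $\varepsilon_q$'s, not on $\varepsilon$ — this is automatic since $m$ is the index of the fixed subgroup $\operatorname{im}(\iota^*)\subseteq L(G)^*$; I would state it that way to be safe.

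The step I expect to be the main obstacle is establishing the rational injectivity of $L(G)\to\bigoplus_q L(G_q)$. The cleanest route is the Shinjo–Taniyama characterization combined with the fact that $L(G)\cong H^2(C_2(G),\sigma)$ classifies spatial-graph homology classes and that every class is realized (both facts already recorded in the excerpt), but one must make sure that "agrees on all $2K_3$ linking numbers and all $K_5$, $K_{3,3}$ Simon invariants" translates exactly into "has the same image in $\bigoplus_q L(G_q)$" — this requires knowing that the linking number of a $2K_3$ and the Simon invariant of a $K_5$ or $K_{3,3}$ are, up to the fixed isomorphisms $\varepsilon_q$, precisely the Wu invariants of those restrictions, which is Examples~\ref{2K3}, \ref{K5}, \ref{K33}. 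If one prefers to avoid invoking spatial-graph homology, the fallback is the purely combinatorial argument: show directly from Taniyama's presentation $L(G)=Z(G)/B(G)$ that the composite $Z(G)\to L(G)\to\bigoplus_q L(G_q)$ already kills nothing beyond $B(G)\otimes\mathbb Q$, by checking that each relation in $B(G)$ (coming from a single $\delta(V^{e_i,v_s})$) is visible inside the linking module of some $K_5$ or $K_{3,3}$ subgraph containing $e_i$ and the edges at $v_s$. Either way the rest is formal.
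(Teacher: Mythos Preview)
Your proposal is correct and follows essentially the same approach as the paper: establish injectivity of $\varphi\colon L(G)\to\bigoplus_q L(G_q)$ via Shinjo--Taniyama~\cite{Shinjo}, dualize over $\mathbb{Q}$ to get surjectivity onto $\operatorname{Hom}(L(G)\otimes\mathbb{Q},\mathbb{Q})$, and then clear denominators. The paper cites Shinjo--Taniyama directly for the injectivity of $\varphi$ rather than routing through realizability of every class in $L(G)$ as a Wu invariant (a fact not actually recorded in the paper, so your direct-citation alternative is the cleaner option), and it simply takes $m$ to be a common denominator of the rational coefficients for the given $\varepsilon$ rather than the index of $\operatorname{im}(\iota^*)$, but these are cosmetic differences.
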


\begin{proof}Consider the homomorphism
\begin{eqnarray*}
\varphi:L(G)\longrightarrow \bigoplus_{q=1}^{k}L(G_{q})
\end{eqnarray*}
defined by 
\begin{eqnarray*}
\varphi(x)=((i_{1}\times i_{1})^{*}(x),(i_{2}\times i_{2})^{*}(x),\ldots,(i_{k}\times i_{k})^{*}(x))
\end{eqnarray*}
 Shinjo and Taniyama \cite{Shinjo} proved that for any $x,y\in L(G)$, if $(i_{q}\times i_{q})^{*}(x)=(i_{q}\times i_{q})^{*}(y)$ for any $q=1,2,\ldots,k$ then $x=y$. This implies that $\varphi$ is injective. It follows that $\varphi$ also induces an injective linear map 
\begin{eqnarray*}
\varphi:L(G)\otimes {\mathbb Q}\longrightarrow \bigoplus_{q=1}^{k}(L(G_{q})\otimes {\mathbb Q})
\end{eqnarray*}
and therefore its dual 
\begin{eqnarray*}
{\varphi}^{\sharp}:{\rm Hom}\left(\bigoplus_{q=1}^{k}(L(G_{q})\otimes {\mathbb Q}),{\mathbb Q}\right)\longrightarrow {\rm Hom}(L(G)\otimes {\mathbb Q},{\mathbb Q})
\end{eqnarray*}
is surjective. We consider each $\varepsilon_{q}$ as a linear map from $\bigoplus_{q=1}^{k}(L(G_{q})\otimes {\mathbb Q})$ to ${\mathbb Q}$ in the usual way. Then because each $\varepsilon_{q}$ is an isomorphism, the linear forms $\varepsilon_{1}$, $\varepsilon_{2}$, \dots, $\varepsilon_{k}$ generate ${\rm Hom}\left(\bigoplus_{q=1}^{k}(L(G_{q})\otimes {\mathbb Q}),{\mathbb Q}\right)$. Thus, for any $u\in {\rm Hom}(L(G)\otimes {\mathbb Q},{\mathbb Q})$, there is a $u'\in {\rm Hom}\left(\bigoplus_{q=1}^{k}(L(G_{q})\otimes {\mathbb Q}),{\mathbb Q}\right)$ and rational numbers $r_{1},r_{2},\ldots,r_{k}$ such that $u'=\sum_{q=1}^{k}r_{q}\varepsilon_{q}$. Hence for an element $x$ in $L(G)\otimes {\mathbb Q}$, we have  
\begin{eqnarray*}
u(x)&=&{\varphi}^{\sharp}(u')(x)\\
&=&{\varphi}^{\sharp}\left(\sum_{q=1}^{k}r_{q}\varepsilon_{q}\right)(x)\\
&=& \sum_{q=1}^{k}r_{q}{\varphi}^{\sharp}(\varepsilon_{q})(x)\\
&=& \sum_{q=1}^{k}r_{q}\varepsilon_{q}(\varphi(x))\\
&=& \sum_{q=1}^{k}r_{q}\varepsilon_{q}\circ (i_{q}\times i_{q})^{*}(x). 
\end{eqnarray*}
Now it follows that $\varepsilon_{1}\circ (i_{1}\times i_{1})^{*},\ \varepsilon_{2}\circ (i_{2}\times i_{2})^{*},\ \ldots,\ \varepsilon_{k}\circ (i_{k}\times i_{k})^{*}$ generate ${\rm Hom}(L(G)\otimes {\mathbb Q},{\mathbb Q})$.  Hence, there are rational numbers $r_{1},r_{2},\ldots,r_{k}$ such that 
\begin{eqnarray*}
\tilde{\mathcal L}_{\varepsilon}(f)
= \sum_{q=1}^{k}r_{q}\tilde{\mathcal L}_{\varepsilon_{q}}(f|_{G_{q}}). 
\end{eqnarray*}
This implies the desired conclusion. 
\end{proof}

\medskip

\begin{Example}\label{K6gs2}\rm{Consider the oriented and labeled $K_6$ illustrated in Figure~\ref{K6Simon3}.  Let $\varepsilon$ be the homomorphism from $L(K_{6})$ to ${\mathbb Z}$ given in Example \ref{K6gs}, and let $\tilde{\mathcal L}_{\varepsilon}(f)
$ be the corresponding reduced Wu invariant.  For $q=1$,\dots, 6, let $G_{q}$ be the $K_{5}$ subgraphs illustrated in Figure \ref{K6Simon3} where $q$ is considered mod 6.   Observe that the orientations and labels on $G_q$ are inherited from those on $K_6$.  Then for each $q$, the linking module $L(G_{q})$ is generated by $[E^{x_{1},x_{4}}]$. Let $\varepsilon_{q}$ be the isomorphism from $L(G_{q})$ to ${\mathbb Z}$ defined by $\varepsilon_{q}(x_{1},x_{4})=1$.  Let $f$ be an embedding of $K_6$ in $S^3$.  Then it's not hard to check that: }
\begin{eqnarray*}
2\tilde{\mathcal L}_{\varepsilon}(f)
=\sum_{q=1}^{6}\tilde{\mathcal L}_{\varepsilon_{q}}(f|_{G_{q}}). 
\end{eqnarray*}
\end{Example}

\begin{figure}[htbp]
      \begin{center}
{\includegraphics[width=\textwidth]{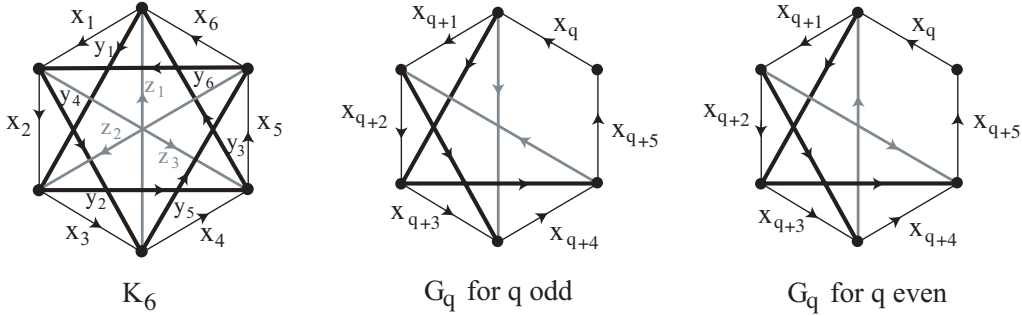}}
      \end{center}
   \caption{An oriented $K_6$ together with $K_5$ subgraphs.}
  \label{K6Simon3}
\end{figure} 

\medskip

\begin{Example}\label{K7gs2}\rm{Consider the oriented and labeled $K_7$ illustrated in Figure~\ref{FigureK7_labeled}.  The epsilon coefficients which gave us the generalized Simon invariant for $K_7$ are
$$	\varepsilon(x_i,x_j) = \varepsilon(y_i,y_j) = \varepsilon(z_i,z_j) = \varepsilon(x_i,z_j) = \varepsilon(y_i,z_j) = 1 	$$
$$	\varepsilon(x_i,y_j) = -1	$$

\noindent These values of $\varepsilon(a,b)$ define a homomorphism $\varepsilon: L(K_{7})\to {\mathbb Z}$, which corresponds to a reduced Wu invariant $\tilde{\mathcal L}_{\varepsilon}(f)$.  For $q=1,2,\ldots,7$, let $G_{q},H_{q},F_{q}$ and $L_{q}$ be the subgraphs of $K_{7}$ illustrated in Figure \ref{K7Simon4}, where the subscripts are considered mod 7.  Observe that the orientations on the subgraphs are inherited from those of $K_7$ in Figure \ref{FigureK7_labeled}.

\begin{figure}[htbp]
      \begin{center}
{\includegraphics[width=1\textwidth]{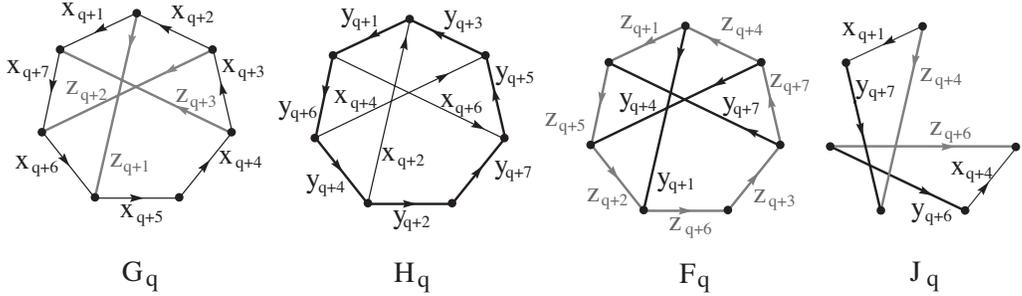}}
      \end{center}
   \caption{We consider these oriented subgraphs of $K_7$.}
  \label{K7Simon4}
\end{figure} 

Each $G_{q},H_{q}$, and $F_{q}$ is homeomorphic to $K_{3,3}$.  Each $L(G_{q})$ is generated by $[E^{x_{1},x_{4}}]$, each $L(H_{q})$ is generated by $[E^{y_{1},y_{7}}]$ and each $L(F_{q})$ is generated by $[E^{z_{1},z_{3}}]$. On the other hand, each $J_{q}$ is homeomorphic to $2K_{3}$, and each $L(J_q)$ is generated by $[E^{x_{q+1},x_{q+4}}]$. Let $\varepsilon_{q}$ be the homomorphism from $L(G_{q})$ to ${\mathbb Z}$ defined by $\varepsilon_{q}(x_{1},x_{4})=1$. Let $\zeta_{q}$ be the homomorphism from $L(H_{q})$ to ${\mathbb Z}$ defined by $\zeta_{q}(y_{1},y_{7})=1$. Let $\eta_{q}$ be the homomorphism from $L(F_{q})$ to ${\mathbb Z}$ defined by $\eta_{q}(z_{1},z_{3})=1$. Let $\theta_{q}$ be the homomorphism from $L(J_{q})$ to ${\mathbb Z}$ defined by $\theta_{q}(x_{q+1},x_{q+4})=1$. Let $f$ be an embedding of $K_7$ in $S^3$. Then it is not hard to check that:}
\begin{eqnarray*}
3\tilde{\mathcal L}_{\varepsilon}(f)
=\sum_{q=1}^{7}\tilde{\mathcal L}_{\varepsilon_{q}}(f|_{G_{q}})
+\sum_{q=1}^{7}\tilde{\mathcal L}_{\zeta_{q}}(f|_{H_{q}})
+\sum_{q=1}^{7}\tilde{\mathcal L}_{\eta_{q}}(f|_{F_{q}})
-5\sum_{q=1}^{7}\tilde{\mathcal L}_{\theta_{q}}(f|_{J_{q}}). 
\end{eqnarray*}
\end{Example}

\medskip 

\section{Minimal crossing number of a spatial graph} 

Let $f$ be a spatial embedding of a graph $G$. The following theorem gives a lower bound for the minimal crossing number of any projection of $f$ up to isotopy.

\begin{Theorem}\label{cr}Let $f$ be an embedding of an oriented graph $G$ in $S^3$ with generalized Simon invariant $\widehat{L}_{\varepsilon}(f)$, and let $c(f)$ be the minimum crossing number of all projections of all embeddings ambient isotopic $f$.  Let $m_{\varepsilon}$ be the maximum of $|\varepsilon(e_{i},e_{j})|$ over all pairs of disjoint edges in $G$.  Then $$\left|\widehat{L}(f)\right| \le c(f) m_{\varepsilon}.$$ 
\end{Theorem}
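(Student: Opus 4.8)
The plan is to bound the generalized Simon invariant by its defining sum and estimate each term using the crossing count. First I would fix an embedding $g$ ambient isotopic to $f$ together with a regular projection of $g$ realizing the minimum crossing number $c(f)$; since $\widehat{L}_{\varepsilon}$ is an ambient isotopy invariant (by hypothesis it is a generalized Simon invariant), we have $\widehat{L}_{\varepsilon}(f) = \widehat{L}_{\varepsilon}(g)$, and the latter is computed from this particular projection as $\sum_{a\cap b=\emptyset}\varepsilon(a,b)\,\ell(g(a),g(b))$.

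Next I would apply the triangle inequality:
\begin{eqnarray*}
\left|\widehat{L}_{\varepsilon}(f)\right|
= \left|\sum_{a\cap b=\emptyset}\varepsilon(a,b)\,\ell(g(a),g(b))\right|
\le \sum_{a\cap b=\emptyset}|\varepsilon(a,b)|\cdot|\ell(g(a),g(b))|
\le m_{\varepsilon}\sum_{a\cap b=\emptyset}|\ell(g(a),g(b))|.
\end{eqnarray*}
The key observation is then that $|\ell(g(a),g(b))|$ is at most the number of crossings in the projection between the images of the disjoint edges $a$ and $b$, since $\ell(g(a),g(b))$ is a signed count of exactly those crossings. Summing over all pairs of disjoint edges therefore counts each crossing of the projection at most once (a crossing between two disjoint edges is counted once; crossings of an edge with itself or with an adjacent edge are not counted at all), so $\sum_{a\cap b=\emptyset}|\ell(g(a),g(b))|$ is at most the total number of crossings, which is $c(f)$. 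Combining the two displays gives $|\widehat{L}_{\varepsilon}(f)| \le c(f)\,m_{\varepsilon}$.

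There is essentially no serious obstacle here; the only point requiring a little care is making explicit that each crossing of the projection contributes to at most one summand $\ell(g(a),g(b))$ with $a\cap b=\emptyset$, so that the double counting really is bounded by the single count $c(f)$ rather than by a multiple of it. I would state this as a one-line remark. (Note that $\widehat L(f)$ in the statement should read $\widehat L_\varepsilon(f)$.)
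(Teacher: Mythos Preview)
Your proof is correct and follows essentially the same approach as the paper's own proof: fix a diagram realizing $c(f)$, apply the triangle inequality to extract $m_{\varepsilon}$, and bound $\sum_{a\cap b=\emptyset}|\ell(g(a),g(b))|$ by $c(f)$ using the observation that self-crossings and crossings between adjacent edges are omitted from the sum. Your version is slightly more explicit about the isotopy-invariance step and about why each crossing is counted at most once, but the argument is the same.
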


\begin{proof}  Fix a diagram of $f(G)$ which realizes the minimal crossing number $c(f)$.  Observe that $c(f)$ includes  crossings between an edge and itself as well as crossings between adjacent edges, which are not included in $\sum_{e_{i}\cap e_{j}=\emptyset}|\ell(f(e_{i}),f(e_{j}))|$.  Therefore, we have the following sequence of inequalities.
\begin{eqnarray*}
|\widehat{L}_{\varepsilon}(f)| 
&=& \left|\sum_{e_{i}\cap e_{j}=\emptyset}\varepsilon(e_{i},e_{j})\ell(f(e_{i}),f(e_{j}))\right|\\
&\le& \sum_{e_{i}\cap e_{j}=\emptyset}|\varepsilon(e_{i},e_{j})| |\ell(f(e_{i}),f(e_{j}))|\\
&\le& m_{\varepsilon} \sum_{e_{i}\cap e_{j}=\emptyset}|\ell(f(e_{i}),f(e_{j}))|\\
&\le& m_{\varepsilon} c(f). 
\end{eqnarray*}
Thus we have the result. 
\end{proof}

Since every reduced Wu invariant with respect to a given homomorphism $\varepsilon$ is a generalized Simon invariant with epsilon coefficients given by $\varepsilon(a,b)$, Theorem \ref{cr} is true for any reduced Wu invariant $\tilde{\mathcal L}_{\varepsilon}(f)$.  

Recall from Example \ref{2K3_2} that the reduced Wu invariant of $2K_3$ is twice the linking number.  Thus applying Theorem \ref{cr} to an embedding of $2K_3$ gives us the well known fact that the minimal crossing number of a $2$-component link is at least twice the absolute value of the linking number.  Applying Theorem \ref{cr} to Examples \ref{K5} and \ref{K33} shows that the minimal crossing number of any spatial embedding of $K_5$ or $K_{3,3}$ is at least the absolute value of the Simon invariant.

\begin{Example}\rm{
Let $f$ be a spatial embedding of $K_{7}$. Consider the generalized Simon invariant $\widehat{L}_{\varepsilon}(f)$ given in Section~\ref{GSI}. Since $m_{\varepsilon}(f)=1$ for any projection of $f$, it follows from Theorem \ref{cr} that $c(f)\ge |\widehat{L}_{\varepsilon}(f)|$. }
\end{Example}

\begin{Example}\rm{Consider the oriented and labeled $K_6$ illustrated in Figure~\ref{K6Simon3}. 
  We introduce a new generalized Simon invariant for $K_6$ where the epsilon coefficients are given by:

$$\varepsilon(x_i,x_j)=\varepsilon (z_i,z_j)=1$$

$$\varepsilon(y_i,y_j)=\varepsilon(x_i, z_j)=-1$$

$$\varepsilon(x_i, y_j)=\varepsilon(y_i, z_j)=0$$
\medskip

It is not hard to check that these epsilon coefficients indeed give us a generalized Simon invariant for $K_6$.  Alternatively, if we let $T_1$ be the triangle with vertices $y_1$, $y_2$, and $y_3$ and let $T_2$ be the triangle with vertices $y_4$, $y_5$, and $y_6$, then we can define $\tilde{\mathcal L}_{\varepsilon}(f)$ as the sum of $2\mathrm{lk}(f(T_1), f(T_2))$ together with the Simon invariant of the oriented $K_{3,3}$-subgraph obtained from $K_6$ by deleting $T_1$ and $T_2$.

Let $f$ be the spatial embedding of $K_{6}$ illustrated in Figure \ref{K6Simon2}, where the rectangle represents the number of positive crossings. We compute the generalized Simon invariant $\tilde{\mathcal L}_{\varepsilon}(f)$ as: 

\begin{eqnarray*}
\tilde{\mathcal L}_{\varepsilon}(f)=\varepsilon(y_{1},y_{4})\cdot (2n+1)+\varepsilon(x_{4}, z_{2})\cdot 1+\varepsilon(y_{6}, y_{3})\cdot 1=-(2n+1)-1=-2n-3\end{eqnarray*}

Since $m_{\varepsilon}=1$, it follows from Theorem~\ref{cr} that $c(f)\geq|\widehat{L}_{\varepsilon}(f)|=2n+3$.  The projection in Figure~\ref{K6Simon2} has $2n+3$ crossings.  Thus this projection has a minimal number of crossings.  In particular, this means that for every odd number $k\geq 3$, there is an embedding $g$ of $K_6$ in $S^3$ such that $c(g)=k$.}

\end{Example}

\begin{figure}[h]
      \begin{center}
\scalebox{0.45}{\includegraphics*{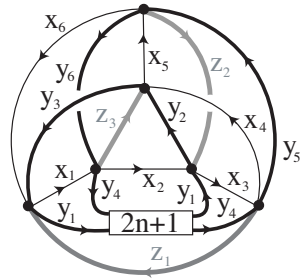}}
      \end{center}
   \caption{This projection of an embedded $K_6$ has a minimal number of crossings.}
  \label{K6Simon2}
\end{figure} 

\begin{Example}\rm{Let $f$ be the embedding of the Heawood graph illustrated in Figure~\ref{HeawoodEmbed}, where each of the rectangles represent the  number of positive crossings.  Using the generalized Simon invariant from Section~\ref{GSI}, we find that $\widehat{L}_{\varepsilon}(f)=5(2k+1)+5(2m+1)+5(2n+1)$.  Also, $m_\varepsilon=5$.  Now it follows from Theorem~\ref{cr} that $c(f)\geq 2(k+m+n)+3$.  Since this is precisely the number of crossings in Figure ~\ref{HeawoodEmbed}, it follows that this projection has a minimal number of crossings.  Since we can choose any values for $k$, $m$, and $n$, it follows that for every odd number $l\geq 3$, there is an an embedding $g$ of the Heawood graph in $S^3$ such that $c(g)=l$.

}\end{Example}

\begin{figure}[h]
      \begin{center}
\scalebox{0.8}{\includegraphics*{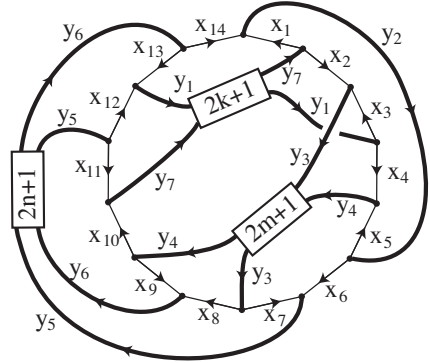}}
      \end{center}
   \caption{This projection of an embedded Heawood graph has a minimal number of crossings.}
  \label{HeawoodEmbed}
\end{figure} 

\begin{acknowledgements}

The authors are grateful to Professor Kouki Taniyama for suggesting that the
Wu invariant might be used to obtain bounds on the minimal crossing number
of a spatial graph. 

The first author was supported in part by NSF grant DMS-0905087, and the third author was partially supported by Grant-in-Aid for Scientific Research (C) (No. 21740046), Japan Society for the Promotion of Science.
Also, the first author  thanks the Institute for Mathematics and its Applications at the University of Minnesota for its hospitality during the Fall of 2013, when she was a long term visitor.
\end{acknowledgements}

\renewcommand\bibname{References}

\end{document}